\let\ORIlabel\label
\let\ORIrefstepcounter\refstepcounter
\AddToHook{package/hyperref/before}{
   \let\label\ORIlabel 
   \let\refstepcounter\ORIrefstepcounter}

\documentclass[hidelinks,onefignum,onetabnum]{siamart220329}

\usepackage{lipsum}
\usepackage{amsfonts}
\usepackage{graphicx}
\usepackage{epstopdf}
\usepackage{algorithmic}
\ifpdf
  \DeclareGraphicsExtensions{.eps,.pdf,.png,.jpg}
\else
  \DeclareGraphicsExtensions{.eps}
\fi

\newsiamremark{remark}{Remark}
\newsiamremark{hypothesis}{Hypothesis}
\crefname{hypothesis}{Hypothesis}{Hypotheses}
\newsiamthm{claim}{Claim}

\headers{Quasi-optimal Complexity $hp$-FEM}{K. Knook, S. Olver, I. P. A. Papadopoulos}

\title{Quasi-optimal Complexity $hp$-FEM\thanks{Submitted to the editors DATE.
\funding{This work was funded by the Fog Research Institute under contract no.~FRI-454.}}}

\author{Kars Knook\thanks{Department of Mathematics, University of Oxford, England
  (\email{kars.knook@maths.ox.ac.uk}).}
\and Sheehan Olver\thanks{Department of Mathematics, Imperial College, London, England
  (\email{s.olver@imperial.ac.uk}, \url{https://www.ma.imperial.ac.uk/\string~solver/}).}
\and Ioannis P. A. Papadopoulos\thanks{Weierstrass Institute for Applied Analysis and Stochastics, Berlin, Germany (\email{papadopoulos@wias-berlin.de})  }}

\usepackage{amsopn}

\usepackage{lmodern}
\usepackage{bm}
\usepackage{microtype}
\usepackage{hyperref}
\usepackage{mathtools}
\usepackage{cleveref}
\usepackage{enumitem}
\usepackage{booktabs}
\usepackage{diagbox}
\usepackage{hhline}

\setlength{\parindent}{0pt}
\setlength{\parskip}{1.2ex}

\title{ Quasi-optimal complexity {$hp$}-FEM for the Poisson Equation on a rectangle }

\def\bfP{{\bf P}}
\def\bfC{{\bf C}}
\def\bfW{{\bf W}}
\def\bfx{{\bf x}}
\def\bfy{{\bf y}}
\def\bfc{{\bf c}}
\def\bfe{{\bf e}}
\def\bfu{{\bf u}}
\def\bfv{{\bf v}}
\def\bff{{\bf f}}
\def\bfH{{\bf H}}
\def\bfQ{{\bf Q}}
\def\bfzero{{\bf 0}}
\def\bfHt{\tilde\bfH}
\def\bfQuptop{\bfQ_{0:p}}
\def\bfQuptoq{\bfQ_{0:q}}
\def\bfPuptop{\bfP_{0:p}}
\def\bfPuptoq{\bfP_{0:q}}

\usepackage{todonotes}

\usepackage{todonotes}

\def\addtab#1={#1\;&=}

\def\meeq#1{\def\ccr{\\\addtab}
 \begin{align*}
 \addtab#1
 \end{align*}
  }  
  
  \def\leqaddtab#1\leq{#1\;&\leq}

\def\pr(#1){\left({#1}\right)}
\def\br[#1]{\left[{#1}\right]}
\def\fbr[#1]{\!\left[{#1}\right]}

\def\ip<#1>{\left\langle{#1}\right\rangle}
\def\iip<#1>{\left\langle\!\langle{#1}\right\rangle\!\rangle}

\def\norm#1{\left\| #1 \right\|}

\def\fpr(#1){\!\pr({#1})}

\def\mapengine#1,#2.{\mapfunction{#1}\ifx\void#2\else\mapengine #2.\fi }

\def\map[#1]{\mapengine #1,\void.}

\def\mapenginesep_#1#2,#3.{\mapfunction{#2}\ifx\void#3\else#1\mapengine #3.\fi }

\def\mapsep_#1[#2]{\mapenginesep_{#1}#2,\void.}

\def\vcbr{\br}

\def\bvect[#1,#2]{
{
\def\dots{\cdots}
\def\mapfunction##1{\ | \  ##1}
\begin{pmatrix}
		 \,#1\map[#2]\,
\end{pmatrix}
}
}

\def\vect[#1]{
{\def\dots{\ldots}
	\vcbr[{#1}]
}}

\def\vectt[#1]{
{\def\dots{\ldots}
	\vect[{#1}]^{\top}
}}

\def\Vectt[#1]{
{
\def\mapfunction##1{##1 \cr} 
\def\dots{\vdots}
	\begin{pmatrix}
		\map[#1]
	\end{pmatrix}
}}

\def\D{{\rm d}}
\def\dx{\D x}

\def\tF_#1{{\tt F}_{#1}}

\def\tFC_#1{{\tt T}_{#1}}

\def\appref#1{Appendix~\ref{Appendix:#1}}

\def\secref#1{Section~\ref{Section:#1}}

\def\figref#1{Figure~\ref{Figure:#1}}
\def\algref#1{Algorithm~\ref{Algorithm:#1}}

\def\qqand{\qquad\hbox{and}\qquad}
\def\qfor{\quad\hbox{for}\quad}
\def\qqfor{\qquad\hbox{for}\qquad}


\def\elllRpz_#1{\ell_{#1{\rm z}}^{(\lambda,R),p}}

\def\bbR{{\mathbb R}}

\externaldocument[][nocite]{ex_supplement}

\begin{document}

\maketitle

\begin{abstract}
We show, in one dimension, that an $hp$-Finite Element Method ($hp$-FEM) discretisation can be  solved in optimal complexity because the discretisation has a special sparsity structure that ensures that the \emph{reverse Cholesky factorisation}---Cholesky starting from the bottom right instead of the top left---remains sparse. Moreover, computing and inverting the factorisation  { may } parallelise across different elements. By incorporating this approach into an Alternating Direction Implicit (ADI) method à la Fortunato and Townsend (2020) we can solve, within a prescribed tolerance, an $hp$-FEM discretisation of the (screened) Poisson equation on a rectangle with quasi-optimal complexity: $O(N^2 \log N)$ operations where $N$ is the maximal total degrees of freedom in each dimension. When combined with fast Legendre transforms we can also solve
nonlinear time-evolution partial differential equations in a quasi-optimal complexity of $O(N^2 \log^2 N)$ operations, which we demonstrate on the (viscid) Burgers' equation. { We also demonstrate how the solver can be used as an effective preconditioner for PDEs with variable coefficients, including coefficients that support a singularity.}
\end{abstract}

\section{Introduction}
Consider the classic problem of solving the (screened) Poisson equation in a rectangle:
\begin{align}
-\Delta u(x,y) + \omega^2 u(x,y) = f(x,y) \qfor a \leq x \leq b, \;  c \leq y \leq d
\label{eq:poisson}
\end{align}
where $\Delta u  \coloneqq u_{xx} + u_{yy}$ is the Laplacian and we assume vanishing Dirichlet or Neumann boundary conditions. An effective and fast approach to solving this equation is the Fast Poisson Solver: using finite-differences to discretise the partial differential equation (PDE), we can diagonalise the discretisation using the Discrete Cosine Transform in a way that leads to quasi-optimal complexity, that is, $O(N^2 \log N)$ operations where $N$ is the maximal degrees of freedom along each dimension.

	In this paper we introduce an alternative approach that also achieves quasi-optimal complexity but for a high order ($hp$) framework. We utilise the work of Babu{\v{s}}ka and Suri \cite{babuvska1991efficient}, which introduced a basis for the Finite Element Method (FEM) built from tensor products of piecewise integrated Legendre polynomials  that achieved sparse discretisations  for constant  coefficient PDEs on rectangles. A fact that, perhaps, has  been inadequately emphasised is that  this approach enables quasi-optimal\footnote{There are false misconceptions in the literature originating in \cite{orszag1979spectral} that optimal complexity is $O(p^{d+1})$ operations as $p \rightarrow \infty$. We contend that optimal is $O(p^d)$ operations, and indeed our quasi-optimal complexity outperforms the misreported ``optimal'' complexity.} application of the discretisation when  combined with fast Legendre transforms \cite{alpert1991fast,keiner2011fast,townsend2018fast}: the complexity is $O(p^2 n^2 \log^2 p)$ for a discretisation of a tensor product of $p$-degree polynomials where the rectangle is subdivided into $n^2$ rectangles (that is $h = 1/n$ on the unit rectangle). Applying the discretisation quasi-optimally in an iterative framework is, therefore, a solved problem.

Inverting the discretisation is another story.  While Fortunato and Townsend \cite{fortunato2020fast}  introduced the first spectral\footnote{\, ``First'' and ``spectral'' are up to debate: it is only spectral if the solution itself is smooth. But certainly ``Fast'' is an undisputed adjective.} Fast Poisson Solver, which achieves quasi-optimal complexity  for solving the 2D Poisson equation with the aforementioned basis using the Alternating Direction Implicit (ADI) method, it was only  applicable when there was a single element. The aim of this work is to extend their approach to an arbitrary number of elements in a manner that is robust to $h$- and $p$-refinement.

The ingredients that made \cite{fortunato2020fast} successful were:
\begin{enumerate}
\item A fast solve for one-dimensional discretisations.
\item Control on the separation of the spectrum from the origin. 
\end{enumerate}
For (1) we introduce an optimal complexity $hp$-FEM solver in 1D in \secref{Cholesky} for Symmetric Positive Definite (SPD) problems: the complexity is $O(p n)$ where $p$ is the polynomial degree and $n$ is the number of elements.  For (2) we observe that the smallest eigenvalue can be computed in optimal complexity and we prove bounds built on known $hp$-FEM results that guarantee that it has the needed behaviour to achieve quasi-optimal complexity. 

\bigskip

Sparse $p$- and $hp$-FEM have a rich history. They can be traced to the work of Szab\'o \cite{szabo1979}, see also \cite[Ch.~2.5.2]{szabo2011introduction} and \cite[Ch.~3.1]{Schwab1998}. Extensions to two dimensions were further developed by Babu{\v{s}}ka and Suri~\cite{babuvska1991efficient} and Beuchler and Sch\"oberl~\cite{Beuchler2006}, where they construct a $p$-FEM on quadrilaterals and simplices, respectively. Other works of a similar theme include \cite{babuvska1994p, pavarino1993, Beuchler2007, Beuchler2012, Jia2022, Beuchler2012b, Dubiner1991, Karniadakis2005, Schwab1998} and \cite[App.~A]{Snowball2020}. The focus of $hp$-FEM literature is often deriving the necessary frameworks, proving optimal mesh adaptivity strategies, and obtaining exponential convergence rates \cite{Schwab1998, houston2002}.

The literature on fast solvers for the Poisson equation is  extensive.  To name but a few techniques: Fast Fourier Transforms (FFT), cyclic reductions \cite{Buzbee1970}, fast direct solvers%
\, for boundary element and multipole methods  \cite{Mckenney1995, Martinsson2019}, pseudospectral Fourier with polynomial subtraction \cite{Averbuch1998, Braverman1998}, the fast diagonalization method \cite{Lynch1964}, multigrid methods \cite{Brubeck2022, Gholami2016, Huismann2019, Lottes2005,schoberl2008}, and domain decomposition \cite{gillman2014direct, gillman2012direct, martinsson2009fast}. Almost always there is a tradeoff between asymptotic complexity, speed, and flexibility of the methods, e.g.~the structure required in the mesh. To our knowledge, except for the solver described in this work, there exists no fast Poisson solver in 2D that simultaneously (1) converges spectrally when the solution is smooth, (2) can mesh the domain into rectangular elements and, therefore, efficiently capture discontinuities in the data and (3) asymptotically requires only $O(N^2 \log N) = O( (pn)^2 \log (pn))$ operations for the solve and $O(N^2 \log^2 N) = O( (pn)^2 \log^2 (pn))$ operations for the setup.

\begin{remark}There are many other high-performance techniques to solve partial differential equations on similar meshes as we consider.
For moderate choices of $p$ these may outperform the method we propose, and so for many applications our contribution may be mostly theoretical. However, to our knowledge none of the existing work achieves optimal complexity for both $h$ and $p$ refinement, hence for very large $p$ our approach will outperform existing techniques. In particular, multigrid techniques that report $O(p^{d+1})$ flops for the solve \cite{Brubeck2022} (we achieve $O(p^2 \log p)$ in 2D) or $h$ and $p$-independent Krylov iteration counts \cite{schoberl2008} do not discuss the complexity of the setup (such as the quadrature for assembling the load vectors). Eventually the setup will become a bottleneck in such solvers unlike in our method where the setup remains quasi-optimal. A thorough investigation of the choice of $p$ where our approach becomes competitive would require a distributed memory implementation in order to  effectively parallelise the algorithm, which is beyond the scope of this current paper.
\end{remark}

\bigskip

The structure of the paper is as follows:

\secref{integratedLegendre}: We review the integrated Legendre functions of \cite{babuvska1991efficient} (see also \cite[Ch.~3.1.4]{Schwab1998}) and see how they lead to discretisations of  differential operators with a special sparsity structure which we call  {\it Banded-Block-Banded Arrowhead  ($B^3$-Arrowhead) Matrices}.

\secref{Discretisation}: We explain how the Poisson equation can be recast as a simple linear system involving a $B^3$-Arrowhead matrix in 1D and a Sylvester equation involving $B^3$-Arrowhead matrices in 2D. 

\secref{Cholesky}: We show that a reverse Cholesky factorisation---a factorisation of a matrix as $A = L^\top L$ where $L$ is lower triangular--- for $B^3$-Arrowhead matrices can be computed and the inverse applied in optimal complexity. Moreover, the factorisation { potentially } parallelises between different elements. 

\secref{ADI}: We discuss the ADI method and how it can be used to solve the (screened) Poisson equation in quasi-optimal complexity.  This requires spectral analysis of the underlying discretisations to control the number of iterations needed in ADI.

\secref{Transforms}: We discuss how to transform between coefficients and values efficiently, using visicid Burgers' equation in 2D with a discontinuous initial
condition as an example.

\secref{Preconditioning}:  Our solver can be used effectively as a preconditioner for PDEs with variable coefficients, where each iteration achieves quasi-optimal complexity. We demonstrate this on a time-independent Schr\"odinger equation with a variable coefficient with a singularity, where the number of iterations is roughly independent of $h$ and $p$.

\textbf{Code availability:} The numerical experiments found in this manuscript where conducted in Julia and and can be found at ADIPoisson.jl \cite{ADIPoisson.jl2024, adipoisson.jl-zenodo}.

\section{Integrated Legendre functions}\label{Section:integratedLegendre}

In this section we introduce the one-dimensional basis of integrated Legendre functions that underlies our discretisation of the Poisson equation.

\subsection{A basis for a single interval}
Define the weighted ultraspherical/Jacobi polynomials\footnote{This definition is also equal to the ultraspherical polynomial $C_{k+2}^{(-1/2)}(x)$, but to avoid discussion of orthogonal polynomials with non-integrable weights we do not use this relationship.} as
\[
W_k(x) \coloneqq  {(1-x^2) C_k^{(3/2)}(x) \over (k+1) (k+2)} = {(1-x^2) P_k^{(1,1)}(x) \over 2(k+1)} 
\]
where $C_k^{(\lambda)}$ are orthogonal with respect to $(1-x^2)^{\lambda - 1/2}$ on $[-1,1]$ for $\lambda > -1/2$, $\lambda \neq 0$  with normalisation constant
\[
	C_k^{(\lambda)}(x) = {2^k (\lambda)_k \over k!} x^k + O(x^{k-1})
\]
where $(\lambda)_k = \lambda(\lambda+1)\cdots(\lambda+k-1)$ is the Pochammer symbol.  $P_k^{(a,b)}(x)$ are Jacobi polynomials orthogonal with respect to $(1-x)^a(1+x)^b$ on $[-1,1]$ with normalisation constant given in \cite[18.3]{DLMF}.   
  
  The choice of normalisation is chosen because it leads to the simple formula
\begin{align}
  W_k'(x) = -P_{k+1}(x) \label{eq:deriv}
\end{align}
    for the Legendre polynomials $P_k(x) \equiv C_k^{(1/2)}(x)$ \cite[18.9.16]{DLMF}. In other words, they are the integral of Legendre polynomials: up to a constant they are precisely the \emph{integrated Legendre functions} used by Babuška. They are also equivalent to the basis defined by Schwab \cite[Ch.~3.1]{Schwab1998} and utilised by Fortunato and Townsend \cite{fortunato2020fast}.

It is convenient to express this relationship in terms of \emph{quasi-matrices}, which can be viewed as matrices that are continuous in the first dimension, or equivalently as a row-vector whose columns are functions:
\[
	{\D \over \dx} \underbrace{[W_0,W_1,W_2,\ensuremath{\ldots}]}_\bfW = \underbrace{[P_0,P_1,P_2,\ensuremath{\ldots}]}_\bfP  \underbrace{\begin{bmatrix} 0 \\ -1 \\ & -1 \\ && -1 \\ &&&\ddots \end{bmatrix}}_{D_W^P}.
\]
If we have a single element in 1D we can use this basis as the test and trial basis in the weak formulation of a  a differential equation. Let $\langle \cdot, \cdot \rangle$ denote the $L^2(-1,1)$-inner product. Then the Gram/mass matrix associated with Legendre polynomials is
\[
	\ip<\bfP^\top, \bfP> \coloneqq \begin{bmatrix}
		\ip<P_0,P_0> & \ip<P_0,P_1>  & \cdots \\
		\ip<P_1,P_0>& \ip<P_1,P_1>  & \cdots \\
		\vdots & \vdots & \ddots 
	\end{bmatrix} = \underbrace{\begin{bmatrix} 2 \\ & 2/3 \\ && 2/5 \\ &&& \ddots \end{bmatrix}}_{M_P}
\]
whilst the  discretisation of the weak 1D Laplacian is diagonal:
\begin{align*}
-\Delta_W &\coloneqq \ip<(\bfW')^\top, \bfW'>  = \ip<(\bfP D_W^P)^\top, \bfP D_W^P> \\
 &= 
(D_W^P)^\top M_P D_W^P = \begin{bmatrix} 2/3 \\ &  2/5  \\ && 2/7 \\ &&& \ddots \end{bmatrix}
\end{align*}
which is another way to write the formula:
\[
\ensuremath{\langle}W_k',W_j'\ensuremath{\rangle} = \ensuremath{\langle}P_{k+1},P_{j+1}\ensuremath{\rangle} =  {2 \over 2k+3}  \ensuremath{\delta}_{kj}.
\]

The mass matrix can be deduced by using the lowering relationship:
\begin{align}
\underbrace{[W_0,W_1,W_2,\ensuremath{\ldots}]}_\bfW = \underbrace{[P_0,P_1,P_2,\ensuremath{\ldots}]}_\bfP \underbrace{\begin{bmatrix} \times  \\ 0 & \times \\ \times & 0 & \times \\ &\times &0 & \times \\ &&\ensuremath{\ddots} & \ensuremath{\ddots} & \ensuremath{\ddots} 
\end{bmatrix}}_{L_W}
\label{rec:lowering}
\end{align}
where the exact formul\ae\ for the entries is in \appref{Recurrences}. For now we focus on sparsity structure. Subsequently the mass matrix can be expressed as a truncation of an infinite pentadiagonal matrix:
 \begin{align*}
	M_W &\coloneqq
\ensuremath{\langle}\bfW^\ensuremath{\top}, \bfW\ensuremath{\rangle} = L_W^\ensuremath{\top}\ensuremath{\langle}\bfP^\ensuremath{\top}, \bfP\ensuremath{\rangle} L_W = L_W^\ensuremath{\top} M_P L_W \cr
&= \begin{bmatrix}
\times & 0 & \times \\
0 & \times & 0 & \times  \\
\times & 0 & \times & 0 & \times  \\
 & \times & 0 & \times & 0 & \ddots  \\
  && \times & 0 & \times & \ddots  \\
 & &  & \ddots & \ddots & \ddots
 \end{bmatrix}.
\end{align*}
The entries have simple explicit rational expressions, or alternatively one can view this as a product of banded matrices. The latter approach is slightly less efficient but we will use it for clarity in exposition. We can similarly find the matrix of the inner products that arise in testing with this basis:
\[
\ip<\bfW^\top, \bfP>  = L_W^\top  \ip<\bfP^\top, \bfP>    = L_W^\top M_P.
\]

\subsection{Multiple intervals}
\label{sec:multiple-intervals}
Partitioning an interval $[a,b]$  into $n$ subintervals $a = x_0 < x_1 < \cdots < x_n = b$
we can use an affine map
\[
	a_j(x) = {2x - x_{j-1}  - x_j  \over x_j - x_{j-1}}
\]
to the reference interval $[-1,1]$ to  construct mapped {\it bubble functions} as
\[
W_{kj}^\bfx(x) \coloneqq \begin{cases}
	W_k(a_j(x)) & x \in [x_{j-1},x_j] \\
	0 & \hbox{otherwise}
	\end{cases}
\]
on each interval. We combine these with the standard piecewise linear hat basis
\begin{align*}
h_0^\bfx(x) &\coloneqq \begin{cases}
	{x_1 - x \over x_1 - x_0} & x \in [x_0,x_1], \\
	0 & \hbox{otherwise},
	\end{cases} \;\;\;
h_n^\bfx(x) \coloneqq \begin{cases}
	{x- x_{n-1} \over x_n - x_{n-1}} & x \in [x_{n-1},x_n], \\
	0 & \hbox{otherwise},
	\end{cases}\\
h_j^\bfx(x) &\coloneqq \begin{cases}
	{x - x_{j-1}  \over x_j - x_{j-1}} & x \in [x_{j-1},x_j], \\
	{x_{j+1}-x \over x_{j+1} - x_j} & x \in [x_j,x_{j+1}], \\
	0 & \hbox{otherwise},
	\end{cases}	\qqfor j = 1,\ldots,n-1.
\end{align*}

The hat and bubble functions are sometimes known as the internal and external shape functions, respectively \cite[Def.~3.4]{Schwab1998}. We form a block quasi-matrix by grouping together the hat  and bubble functions of the same degree:
\[
{\bf C}^\bfx \coloneqq [\underbrace{h_0^\bfx,\ldots,h_n^\bfx}_{\bfH^\bfx} | \underbrace{W_{01}^\bfx, \ldots,W_{0n}^\bfx}_{\bfW_0^\bfx} | \underbrace{W_{11}^\bfx, \ldots,W_{1n}^\bfx}_{\bfW_1^\bfx} | \cdots].
\]
We relate this to the piecewise Legendre basis
\[
\bfP^\bfx \coloneqq [\underbrace{P_{01}^\bfx, \ldots,P_{0n}^\bfx}_{\bfP_0^\bfx} | \underbrace{P_{11}^\bfx, \ldots,P_{1n}^\bfx}_{\bfP_1^\bfx} | \cdots]
\]
where
\[
P_{kj}^\bfx(x) \coloneqq \begin{cases}
	P_k(a_j(x)) & x \in [x_{j-1},x_j] \\
	0 & \hbox{otherwise}
	\end{cases}.
\]
In what follows we often omit the dependence on $\bfx$.

\subsubsection{The mass matrix}
Restricting to each panel, our basis is equivalent to a mapped version of the one panel basis defined above, hence we can re-expand ${\bf C}^\bfx$ in terms of $\bfP^\bfx$.  First note that the mass matrix  is diagonal, which we write in block form as:
\[
	\ip<(\bfP^\bfx)^\top, \bfP^\bfx> = \underbrace{\left[\begin{array}{c | c|c |c }
		M_{11} &   &\\
		\hline
		& M_{22}   &   \\
		\hline
		& & M_{33}    \\
		\hline 
		&&  & \ddots 
	\end{array}\right]}_{M_P^\bfx},
\]
where $\langle \cdot, \cdot \rangle$ denotes the $L^2(a,b)$-inner product. Since piecewise Legendre polynomials completely decouple we can view  this matrix as a direct sum:
\[
	M_P^\bfx = \pr({x_1 - x_0 \over 2} M_P) \oplus \cdots \oplus \pr({x_n - x_{n-1} \over 2} M_P)
\]
where the direct sum corresponds to interlacing the entries of the matrix, i.e.,
\[
	\bfe_\ell^\top M_{kj} \bfe_\ell = \bfe_k^\top \pr({x_{\ell} - x_{\ell-1} \over 2} M_P) \bfe_j.
\]

Note that given a piecewise polynomial $f$, its coefficients in the basis $\bfP^\bfx$ can be expressed as:
\[
	(M_P^\bfx)^{-1} \ip<(\bfP^\bfx)^\top, f>.
\]
We use this to determine  the (block) connection matrix
\[
	\bfC = \bfP \underbrace{\left[\begin{array}{c | c|c |c | c}
		R_{00} &R_{01}  &&\\
		\hline
		R_{10} & \bfzero  &R_{12}  &\\
		\hline
		&R_{21}  & \bfzero &R_{23} \\
		\hline 
		&& \ddots & \ddots & \ddots
	\end{array}\right]}_{R_C^\bfx}
\]
where the blocks are
\begin{align}
R_{k0} &\coloneqq M_{kk}^{-1} \ip< \bfP_k^\top, \bfH> 
= \begin{bmatrix}
\times & \times \\ & \ddots & \ddots \\&& \times & \times
\end{bmatrix}\in \bbR^{n \times (n+1)}, \label{rec:conversion1} \\
R_{kj} &\coloneqq M_{kk}^{-1} \ip< \bfP_k^\top, \bfW_{j-1}> 
= \begin{bmatrix}
\times & \\ & \ddots  \\&& \times 
\end{bmatrix} \in \bbR^{n \times n},\qfor j > 0,\label{rec:conversion2}
\end{align}
with the explicit formul\ae\ for the entries given in \appref{Recurrences}.
We thus have the mass matrix
\begin{align*}
M_C^\bfx &\coloneqq  \ip<\bfC^\top, \bfC> = R_C^\top M_P R_C \\
&= \left[\begin{array}{c c c c | c c c | c c c | c c c | c c c}
\times & \times &    &        & \times &   &         & \times &   &    &&&\\
\times & \times & \ddots &         & \times & \ddots &         & \times & \ddots &    &&&\\
  & \ddots & \ddots & \times       &   & \ddots & \times       &   & \ddots & \times  &&&\\
  &    & \times & \times      &   &    & \times       &   &    & \times  &&&\\
  \hline
  \times & \times &    &       & \times &   &     &&&    & \times &   &   \\
    & \ddots & \ddots &        &   & \ddots &    &&&     &  & \ddots &   \\
     &   & \times & \times      &   &   & \times  &&&     &   &  & \times \\
	 \hline
	 \times & \times &    &  &&&     & \times &   &     &&&    &  &   &   \\
	 & \ddots & \ddots &     &&&   &   & \ddots &    &&&     &  & \ddots &   \\
	  &   & \times & \times  &&&    &   &   & \times  &&&     &   &  &  \\
	  \hline
	    &&&     & \times &   &     &&&    & \times &   &     &  &   &   \\
	  &&&   &   & \ddots &    &&&     &  & \ddots &     &  & \ddots &   \\
	   &&&    &   &   & \times  &&&     &   &  & \times    &   &  &  \\
	   \hline 
	   &&&     &  &   &     &&&    &  &   &     &  &   &   \\
	   &&&   &   &  &    && \ddots &     &  & \ddots &     &  & \ddots &   \\
		&&&    &   &   &   &&&     &   &  &     &   &  &  \\	   
\end{array}\right].
\end{align*}
where again the entries have simple rational expressions that can be deduced from the components. The structure is important here:  every block is banded, and every block not in the first row or column is diagonal. 

\subsubsection{The weak Laplacian}

Similarly, we can express differentiation as a block-diagonal matrix:
\[
	{\D \over \dx} {\bf C}  = \bfP \underbrace{\left[\begin{array}{c | c|c }
		D_{00}  &&\\
		\hline
		& D_{11}  &\\
		\hline
		&& \ddots 
	\end{array}\right]}_{D^\bfx},
\]
where the blocks are
\begin{align}
D_{00} &\coloneqq M_{\bfP_0}^{-1} \ip< \bfP_0^\top, \bfH'> 
= \begin{bmatrix}
\times & \times \\ & \ddots & \ddots \\&& \times & \times
\end{bmatrix}\in \bbR^{n \times (n+1)}, \label{rec:deriv1}\\
D_{kj} &\coloneqq M_{\bfP_k}^{-1} \ip< \bfP_k^\top, \bfW_{j-1}'> 
= \begin{bmatrix}
\times & \\ & \ddots  \\&& \times 
\end{bmatrix} \in \bbR^{n \times n},\qfor j > 0,\label{rec:deriv2}
\end{align}
with the explicit entries given in \appref{Recurrences}. We thus deduce that the weak Laplacian is also block diagonal with structure
\begin{align*}
-\Delta_C^\bfx &\coloneqq  \ip<(\bfC')^\top, \bfC'> = (D)^\top M_P D \\
&= \left[\begin{array}{c c c c | c c c | c c c }
\times & \times &    &        &  &   &         \\
\times & \times & \ddots &         &  & &        \\
  & \ddots & \ddots & \times       &   & &       \\
  &    & \times & \times      &   &    &        \\
  \hline
   &  &  &       & \times &   &     \\
   &  & &        &   & \ddots &    \\
    &  & &       &   &   & \times  \\
	 \hline
	  &  &    &  &&&     &  &   &    \\
	 &  &  &     &&&   &   & \ddots &    \\
	  &   &  &   &&&    &   &   &  
\end{array}\right].
\end{align*}

Again the structure is important: we have a block diagonal matrix whose blocks are all diagonal, apart from the first which is tridiagonal.

\subsection{Homogeneous Dirichlet boundary condition}

To enforce homogeneous Dirichlet boundary conditions we need to  drop the basis functions that do not vanish at the boundary, that is, the first and last hat function. Thus we use the following basis:
\begin{align*}
{\bfQ}^\bfx &\coloneqq \bfC^\bfx  \underbrace{\left[\begin{array}{c c c c | c c c | c c c }
0 & &    &        &  &   &         \\
1 &  &  &         &  & &        \\
  & \ddots  &  &       &   & &       \\
  &    & 1 &       &   &    &        \\
  &&0 &		&&&\\
  \hline
   &  &  &       & 1 &   &     \\
   &  & &        &   & \ddots &    \\
    &  & &       &   &   & 1  \\
	 \hline
	  &  &    &  &&&     &  &   &    \\
	 &  &  &     &&&   &   & \ddots &    \\
	  &   &  &   &&&    &   &   &  
\end{array}\right]}
_{P_{D,n}} \\
&= [\underbrace{h_1,\ldots,h_{n-1}}_{\bfHt} | \underbrace{W_{01}, \ldots,W_{n1}}_{\bfW_1} | \underbrace{W_{02}, \ldots,W_{n2}}_{\bfW_2} | \cdots].
\end{align*}
The discretised operators are the same as above but with the first and last row of the first row/column blocks removed which modifies the band structure, i.e.~we have
\begin{align*}
M_Q^\bfx &\coloneqq  \ip<(\bfQ)^\top, \bfQ> = P_D^\top  M_C P_D \\
&= \left[\begin{array}{c c c c | c c c | c c c | c c c | c c c}
\times & \times &    &        & \times &   &         & \times &   &    &&&\\
\times & \times & \ddots &         & \times & \ddots &         & \times & \ddots &    &&&\\
  & \ddots & \ddots & \times       &   & \ddots & \times       &   & \ddots & \times  &&&\\
  &    & \times & \times      &   &    & \times       &   &    & \times  &&&\\
  \hline
  \times & \times &    &       & \times &   &     &&&    & \times &   &   \\
    & \ddots & \ddots &        &   & \ddots &    &&&     &  & \ddots &   \\
     &   & \times & \times      &   &   & \times  &&&     &   &  & \times \\
	 \hline
	 \times & \times &    &  &&&     & \times &   &     &&&    &  &   &   \\
	 & \ddots & \ddots &     &&&   &   & \ddots &    &&&     &  & \ddots &   \\
	  &   & \times & \times  &&&    &   &   & \times  &&&     &   &  &  \\
	  \hline
	    &&&     & \times &   &     &&&    & \times &   &     &  &   &   \\
	  &&&   &   & \ddots &    &&&     &  & \ddots &     &  & \ddots &   \\
	   &&&    &   &   & \times  &&&     &   &  & \times    &   &  &  \\
	   \hline 
	   &&&     &  &   &     &&&    &  &   &     &  &   &   \\
	   &&&   &   &  &    && \ddots &     &  & \ddots &     &  & \ddots &   \\
		&&&    &   &   &   &&&     &   &  &     &   &  &  \\	   
\end{array}\right].
\end{align*}

As before every block is banded, and every block not in the first row or column is diagonal.  The primary difference with the $\bfC^\bfx$ case above is the bandwidths of some of the blocks.

\section{Discretisations of the screened Poisson equation}\label{Section:Discretisation}

In order to discuss the FEM discretisation of \cref{eq:poisson} we  first recast it in variational form. Let $\Omega = [a,b] \times [c,d]$ and $H^s(\Omega) \coloneqq W^{s,2}(\Omega)$ where $W^{s,q}(\Omega)$, $s > 0$, $q \geq 1$, denote the standard Sobolev spaces \cite{Adams2003}. We use $L^q(\Omega)$, $q \geq 1$, to denote the Lebesgue spaces and the notation $H^1_0(\Omega)$ for the space $\{ v \in H^1(\Omega) : v|_{\partial\Omega} = 0\}$ where $|_{\partial \Omega} : H^1(\Omega) \to H^{1/2}( \partial \Omega)$ denotes the usual trace operator \cite{Gagliardo1957}. Let $H^{-1}(\Omega) = H^1_0(\Omega)^*$ denote the dual space of $H^1_0(\Omega)$. Moreover, given a Banach space $X$ and Hilbert space $H$, then $\langle \cdot,  \cdot \rangle_{X^*,X}$ denotes the duality pairing between a function in $X$ and a functional in the dual space
$X^*$, and $\langle \cdot,  \cdot \rangle_{H}$ denotes the inner product in $H$. As in the previous section, we drop the subscript when utilising the $L^2(\Omega)$-inner product. Given an $f \in H^{-1}(\Omega)$, we may rewrite the Poisson equation in variational (weak) form as: find $u \in H^1_0(\Omega)$ that satisfies
\begin{align}
\langle \nabla v, \nabla u \rangle + \omega^2 \langle  v, u \rangle = \langle v, f \rangle_{H^1_0(\Omega), H^{-1}(\Omega)} \;\; \text{for all} \;\; v \in H^1_0(\Omega),
\label{eq:weak-form}
\end{align}
where $\nabla u = (u_x, u_y)^\top$ is the gradient operator. To construct a discretisation in the FEM framework one chooses subspaces of $H^1(\Omega)$ as {\it trial space}  (discretisation of $u$) and {\it  test space} (discretisation of $v$), specified by their bases, which are termed {\it trial} and {\it test bases}, respectively.
 
 \subsection{Screened Poisson in 1D}
 
For zero Dirichlet problems we use as both the test and trial basis  $\bfQ$ up to degree $p$:
\[
	\bfQuptop \coloneqq \bfQ \underbrace{\begin{bmatrix} I_{n-1}\\ & I_n \\ && \ddots \\ &&& I_n \\ &&& 0_{n \times n} \\ &&& \vdots \end{bmatrix}}_{I_{0:p} \in \bbR^{\infty \times N}}  = [\bfHt | \bfW_0 | \cdots | \bfW_{p-2}],
\]
where $N = p  n-1$ is the total degrees of freedom in our basis up to degree $p$. That is, we approximate the solution, for $\bfu_p \in \bbR^N$:
$
u(x) \approx u_p(x) \coloneqq \bfQuptop(x) \bfu_p
$ and represent our test functions as $v_p = \bfQuptop \bfv_p$. The discretisation of our weak formulation then becomes:
\meeq{
\ip<v_p',u_p'> + \omega \ip<v_p,u_p> =  \bfv_p^\top \ip<((\bfQuptop)')^\top,(\bfQuptop)'> \bfu_p  + \omega \bfv_p^\top \ip<(\bfQuptop)^\top,(\bfQuptop)'> \bfu_p \ccr
	 = \bfv_p^\top (- \underbrace{I_{0:p}^\top\Delta_QI_{0:p}}_{\Delta_{Q,p}} +  \omega^2  \underbrace{I_{0:p}^\top M_QI_{0:p}}_{M_{Q,p}})  \bfu_p.
}
If we further assume that we have made a piecewise polynomial approximation of our right-hand side as $f \approx f_p \coloneqq \bfPuptop \bfu_p$, computed via a fast Legendre transform (\cite{townsend2018fast} or otherwise),  the right-hand side becomes:
\meeq{
	\ip<v_p,f> =\bfv_p^\top I_{0:p}^\top \ip<\bfQ^\top, \bfP> I_{0:p} \bff_p =  \bfv_p^\top I_{0:p}^\top R_Q^\top \ip<\bfP^\top, \bfP> I_{0:p} \bff_p \ccr
	= \bfv_p^\top \underbrace{I_{0:p}^\top R_Q^\top  I_{0:p}}_{R_{Q,p}^\top}  \underbrace{I_{0:p}^\top M_P  I_{0:p}}_{M_{P,p}}  \bff_p.
}
Enforcing this equation for all $\bfv_p \in \bbR^N$ leads us to an $N \times N$  system of equations:
\[
	(-\Delta_{Q,p} +  \omega^2 M_{Q,p})  \bfu_p=  R_{Q,p}^\top M_{P,p} \bff_p.
\]

\subsection{Screened Poisson in 2D}
\label{sec:helmholtz-2d}

For 2D problems we consider partitions $a = x_1 < \cdots < x_m = b$ and $c = y_1 < \cdots < y_n = d$ with truncation up to degrees $p$ and $q$, respectively. Then our basis for zero Dirichlet problems is given by the tensor product of $\bfQ^\bfx$ and $\bfQ^\bfy$, more specifically we have
\[
	u(x,y) \approx u_{pq}(x,y) \coloneqq \bfQuptop^\bfx(x) U_{pq} \bfQuptoq^\bfy(y)^\top
\]
where we represent the unknown coefficients as a matrix $U_{pq} \in \bbR^{M \times N}$ for $M = (p+1) m-1$ and $N = (q+1) n-1$. Similarly we may express the right-hand side as
\[
	f(x,y) \approx f_{pq}(x,y) = \bfPuptop^\bfx(x) F_{pq} \bfPuptoq^\bfy(y)^\top.
\]
Consider an arbitrary test function $v_{pq}(x,y) = \bfQuptop^\bfx(x) V_{pq} \bfQuptoq^\bfy(y)^\top$. By substituting in the expressions for $u$, $v$ and $f$ into \cref{eq:weak-form}, then akin to the 1D case, we arrive at
\begin{align*}
(-\Delta_{Q,p}^\bfx) U_{pq} M_{Q,q}^\bfy &+ M_{Q,p}^\bfx U_{pq}   (-\Delta_{Q,q}^\bfy) + \omega^2 M_{Q,p}^\bfx U_{pq}  M_{Q,q}^\bfy  \notag\\
&= (R_{Q,p}^\bfx)^\top M_{P,p}^\bfx F_{pq} M_{P,q}^\bfy R_{Q,q}^\bfy.
\end{align*}
We can modify this into a Sylvester's equation:
\begin{align}
(-\Delta_{Q,p}^\bfx + {\omega^2 \over 2} M_{Q,p}^\bfx) U_{pq} M_{Q,q}^\bfy &+ M_{Q,p}^\bfx U_{pq}   (-\Delta_{Q,q}^\bfy + {\omega^2 \over 2} M_{Q,q}^\bfy)\notag\\
&= (R_{Q,p}^\bfx)^\top M_{P,p}^\bfx F_{pq} M_{P,q}^\bfy R_{Q,q}^\bfy.
\label{eq:2d:poisson}
\end{align}
In \cref{Section:ADI} we will discuss how to solve \cref{eq:2d:poisson} for $U_{pq}$ in quasi-optimal complexity.

\subsection{General boundary conditions}
Akin to more traditional finite element methods, our method supports inhomogeneous Robin boundary conditions, $\alpha u + (\nabla u) \cdot n = g$ on $\partial \Omega$, for which zero Neumann conditions $(\nabla u) \cdot n = 0$ is a special case. This is achieved by utilizing the full basis $\bfC^\bfx$ (or its tensor product in higher dimensions), rather than $\bfQ^\bfx$, in the discretisation of the variational form
\begin{align}
\langle \nabla v, \nabla u \rangle + \alpha \langle v, u \rangle_{L^2(\partial \Omega)} = \langle v, f \rangle + \langle v, g \rangle_{L^2(\partial \Omega)}.
\end{align}
Mixed boundary conditions may be handled similarly. For instance setting $\alpha = g=0$ and including $h_0^\bfx$ in the discretisation basis, but not $h_n^\bfx$, imposes a zero Neumann boundary condition on the left and a zero Dirichlet boundary condition on the right.

\section{Optimal complexity Cholesky factorisation}\label{Section:Cholesky}

As noted, the  mass matrices  $M_C$/$M_Q$ and weak Laplacians $\Delta_C$/$\Delta_Q$ have a special sparsity structure:

\begin{definition}
A {\it Banded-Block-Banded-Arrowhead  ($B^3$-Arrowhead) Matrix} $A \in \bbR^{m+pn \times m+pn}$  with block-bandwidths $(\ell,u)$ and sub-block-bandwidth $\lambda+\mu$  has the following properties:
\begin{enumerate}
\item It is a block-banded matrix with block-bandwidths $(\ell,u)$.
\item The top-left block $A_0 \in \bbR^{m \times m}$ is banded with bandwidths $(\lambda+\mu,\lambda+\mu)$.
\item The remaining blocks in the first row  $B_k \in \bbR^{m \times n}$ have bandwidths $(\lambda,\mu)$.
\item The remaining blocks in the first column  $C_k \in \bbR^{n \times m}$ have bandwidths $(\mu,\lambda)$.
\item All other blocks $D_{k,j} \in \bbR^{n \times n}$ are diagonal.
\end{enumerate}
We represent the matrix in block form
\begin{align}
\begin{bmatrix}
A_0 & B \\
C & D
\end{bmatrix} = \left[\begin{array}{c|c|c|c|c|c|c}
A_0 & B_1 & \cdots & B_u & &  \\
\hline
C_1 & D_{1,1} & \cdots & D_{1,u}  & D_{1,1+u} &  \\
\hline
\vdots & \vdots & \ddots & \ddots & \ddots  & \ddots \\
\hline
C_l & D_{\ell,1} & \ddots & \ddots & \ddots & \ddots & D_{p-u,p}  \\
\hline
& D_{\ell+1,1} & \ddots & \ddots & \ddots & \ddots & D_{p-u+1,p} \\
\hline 
&& \ddots & \ddots & \ddots & \ddots & \vdots \\
\hline
&&& D_{p,p-\ell} & D_{p,p-\ell+1} & \cdots & D_{p,p}
\end{array}\right]
\label{eq:A}
\end{align}
where $A_0,B_1,\ldots,B_u,C_1,\ldots,C_\ell$ are all banded matrices with bandwidths $(\lambda,\mu)$ whilst $D_{k,j}$ are diagonal matrices. To store the diagonal blocks in $D$  we write
\[
	D = D_1 \oplus \cdots \oplus D_n
\]
where $D_k$ are banded matrices with bandwidths $(\ell,u)$, where  as above the direct sum corresponds to interlacing the entries of the matrix.
\end{definition}

If we apply a Cholesky factorisation $A = L L^\top$ directly we will have fill-in coming from the banded initial rows/columns. The key observation is that if we use a {\it reverse Cholesky factorisation}, that is  a factorisation of the form $A = L^\top L$ which begins in the bottom right, we avoid fill-in.

\begin{theorem}
If $A$ is a Symmetric Positive Definite (SPD) $B^3$-Arrowhead Matrix which has block-bandwidth $(\ell,\ell)$ and sub-block-bandwidth $\lambda+\mu$ then it has a reverse Cholesky factorisation
\[
	A = L^\top L
\]
where $L$ is a $B^3$-Arrowhead Matrix with block-bandwidth $(\ell,0)$ and sub-block-bandwidth $\lambda+\mu$.
\end{theorem}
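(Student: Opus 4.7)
My plan is to apply a grouping permutation that reveals the $B^3$-arrowhead structure as a genuine block arrowhead with block-diagonal tail, after which reverse Cholesky decouples naturally, and then unpermute. Concretely, I would introduce a symmetric permutation $P$ that reorders rows and columns by grouping entries of common sub-block index. Because every non-head block $D_{k,j}$ is diagonal, $PAP^\top$ becomes a standard block arrowhead
\[
PAP^\top = \begin{bmatrix} A_0 & \tilde B_1 & \cdots & \tilde B_n \\ \tilde B_1^\top & \tilde D_1 \\ \vdots && \ddots \\ \tilde B_n^\top &&& \tilde D_n \end{bmatrix},
\]
with $n$ tail blocks $\tilde D_r$ indexed by sub-index $r \in \{1,\ldots,n\}$, each symmetric banded (bandwidth $\ell$, inherited from the block-bandwidth of $A$) and SPD as a principal submatrix of an SPD matrix. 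The arrowhead couplings $\tilde B_r$ inherit narrow support from the $(\lambda,\mu)$ bandwidths of the $B_k$'s: only rows in $[r-\mu, r+\lambda]$ contain nonzeros.

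Next, working in the permuted basis, I would perform reverse Cholesky block-by-block from the bottom right. Each $\tilde D_r = L_r^\top L_r$ by classical banded reverse Cholesky, yielding $L_r$ lower banded of bandwidth $\ell$. The arrowhead columns $L_{C,r} = L_r^{-\top} \tilde B_r^\top$ are obtained by banded triangular back-solves that preserve the narrow column support of $\tilde B_r^\top$. Finally the head factor is the reverse Cholesky of the Schur complement $S = A_0 - \sum_r \tilde B_r \tilde D_r^{-1} \tilde B_r^\top$; the key sparsity calculation is that each summand has support confined to the square $[r-\mu, r+\lambda]^2$, so the sum is banded with bandwidth $\lambda+\mu$, matching $A_0$, and banded reverse Cholesky produces $L_A$ lower banded of bandwidth $\lambda+\mu$.

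Unpermuting, I set $L := P^\top \tilde L P$: the block-diagonal tail of $\tilde L$ (with each $L_r$ lower banded in the block index) translates back into a block-lower-triangular tail of $L$ whose non-head, non-first-column blocks are diagonal, giving block-bandwidth $(\ell,0)$. The arrowhead columns of $\tilde L$ become banded first-column blocks of $L$ with bandwidth $(\mu,\lambda)$, and the head $L_A$ persists as the banded top-left block. This is exactly the claimed $B^3$-arrowhead structure.

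The main technical obstacle is the bandedness of the Schur complement: although each $\tilde D_r^{-1}$ is dense, the narrow row-support of $\tilde B_r$ confines each $\tilde B_r \tilde D_r^{-1} \tilde B_r^\top$ to a $(\lambda+\mu+1)\times(\lambda+\mu+1)$ support block, and the union of these supports over $r$ stays within overall bandwidth $\lambda+\mu$. A secondary verification is that $L = P^\top \tilde L P$ is lower triangular in the original ordering; this reduces to checking that any would-be upper-triangular entry of $L$ corresponds in the permuted basis either to an above-block-diagonal position of $\tilde L$ or to an off-diagonal sub-entry of a tail diagonal block, both of which vanish.
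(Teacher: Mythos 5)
Your proposal is correct and follows essentially the same route as the paper: the explicit grouping permutation you introduce is exactly what the paper encodes by writing $D = D_1 \oplus \cdots \oplus D_n$ with the direct sum interpreted as interlacing, and both arguments then factor the tail via $n$ independent banded reverse Cholesky factorisations, form the Schur complement $A_0 - B D^{-1} B^\top$, and show it remains banded with bandwidth $\lambda+\mu$ (your support-confinement count for each $\tilde B_r \tilde D_r^{-1}\tilde B_r^\top$ is the same calculation as the paper's observation that the blocks of $B\tilde L^{-1}$ have bandwidths $(\lambda,\mu)$ and bandwidths add under multiplication). Your closing check that the unpermuted factor is genuinely lower triangular is a detail the paper leaves implicit, but it is the same proof.
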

\begin{proof}

We begin by writing $A$ as a block matrix:
\[
	A = \begin{bmatrix} 
		A_0 & B \\
		B^\top  & D
	\end{bmatrix},
\]
where
\[
	D = D_1 \oplus \cdots \oplus D_n.
\]
The reverse Cholesky factorisation $D = \tilde L^\top \tilde  L$ can be deduced from the reverse Cholesky factorisations of $D_j = L_j^\top L_j$. In particular we have that
\[
	\tilde  L \coloneqq L_1 \oplus \cdots \oplus L_n \qqand \tilde  L^{-1} = L_1^{-1} \oplus \cdots \oplus L_n^{-1}.
\]
Now write
\[
	A = \begin{bmatrix} 
		A_0 & B \\
		B^\top  & \tilde L^\top  \tilde  L
	\end{bmatrix} = \begin{bmatrix} I & B \tilde L^{-1}  \\ & \tilde  L^\top \end{bmatrix}  \begin{bmatrix} 
		A_0 - B \tilde  L^{-1} \tilde  L^{-\top} B^\top &  \\
		& I
	\end{bmatrix} \begin{bmatrix} I \\  \tilde L^{-\top} B^\top & \tilde L \end{bmatrix}.
\]
Write $\tilde L^{-1}$ in block form
\[
	\tilde L^{-1} = \left[\begin{array}{c | c | c}
		\tilde L_{1,1} && \\
		\hline
		\vdots & \ddots \\
		\hline 
		\tilde L_{p,1}& \cdots & \tilde L_{p,p}
	\end{array}\right]
\]
noting each block is diagonal. We see that
\[
	B \tilde L^{-1} = [B_1 \tilde L_{1,1} + \cdots + B_\ell \tilde L_{\ell,1} | B_2 \tilde L_{2,2} + \cdots + B_\ell \tilde L_{\ell,2} | \cdots | B_\ell \tilde L_{\ell,\ell} | 0 | \cdots | 0 ]
\]
where each block has bandwidth $(\lambda,\mu)$. Thus 
\[
	B \tilde  L^{-1} \tilde  L^{-\top} B^\top = (B_1 \tilde L_{1,1} + \cdots + B_\ell \tilde L_{\ell,1}) ( \tilde L_{1,1} B_1^\top + \cdots +  \tilde L_{\ell,1} B_\ell^\top)  + \cdots + B_\ell \tilde L_{\ell,\ell}^2 B_\ell^\top
\]
has bandwidths $(\lambda+\mu,\lambda+\mu)$, as multiplying banded matrices adds the bandwidths. Thus $A_0 - B L^{-1} L^{-\top} B^\top = L_0^\top L_0$ also has bandwidths $(\lambda+\mu,\lambda+\mu)$ and its reverse Cholesky factor has bandwidth $(\lambda+\mu,0)$. Thus 
\[
	L =  \begin{bmatrix} L_0 \\
		\tilde L^{-\top} B^\top & \tilde L
	\end{bmatrix}
\]
is a lower triangular $B^3$-Arrowhead matrix with the prescribed sparsity.

\end{proof}

Encoded in this proof is a simple algorithm for computing the reverse Cholesky factorisation, see \algref{ReverseCholesky}.

\begin{algorithm}[tb]
\caption{Reverse Cholesky for $B^3$-Arrowhead Matrices}\label{Algorithm:ReverseCholesky}

{\noindent\bf Input:} {Symmetric positive definite $B^3$-Arrowhead Matrix $A$ with block-bandwidths $(\ell,\ell)$ and sub-block-bandwidths $\lambda+\lambda$.}

{\noindent\bf Output:} {Lower triangular $B^3$-Arrowhead Matrix with block-bandwidths $(\ell,0)$ and sub-block-bandwidths $\lambda+\lambda$ satisfying $A = L^\top L$.}

\begin{algorithmic}[1]
\FOR{$k = 1, \ldots, n$}
\STATE Compute the banded reverse Cholesky factorisations $D_k = L_k^\top L_k$.
\ENDFOR
\FOR{$k = 1, \ldots, \ell$}
\STATE Construct banded matrices 
\[
	M_k = B_k  \tilde L_{k,k} + \cdots + B_\ell \tilde L_{\ell,k}.
\]
\ENDFOR
\STATE Form the banded matrix
\[
	\tilde A_0  = M_1 M_1^\top + \cdots + M_\ell M_\ell^\top.
\]
\STATE Compute the banded reverse Cholesky factorisation $ \tilde A_0 = L_0^\top L_0$.
\STATE Return the lower triangular $B^3$-Arrowhead matrix
\[
	\begin{bmatrix} L_0 \\
		C & L_1 \oplus \cdots \oplus L_n
	\end{bmatrix}
\]
where $C^\top = [M_1 | \cdots | M_\ell | 0 | \cdots | 0]$.
\end{algorithmic}
\end{algorithm}

\begin{corollary}
If $A \in \bbR^{N \times N}$ is an SPD $B^3$-Arrowhead Matrix then the reverse Cholesky factorisation can be computed and its inverse applied in optimal complexity ($O(N)$ operations).
\end{corollary}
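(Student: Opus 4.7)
The proof is essentially a complexity count for Algorithm~\ref{Algorithm:ReverseCholesky} together with a nonzero count for the triangular back-substitution used to apply $L^{-1}$ and $L^{-\top}$. I would treat the bandwidth parameters $\ell$, $\lambda$, $\mu$ as fixed constants (as they are in the intended applications), write $N = m + pn$, and show every step is $O(N)$.

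For constructing the factorisation, I would walk through the three phases of the algorithm. Step~1 performs $n$ banded reverse Cholesky factorisations of $p \times p$ matrices of bandwidth $\ell$, each at cost $O(p \ell^2)$ by the standard banded Cholesky analysis, giving $O(np\ell^2)$. In step~2, the key observation is that $\tilde L^{-1}$ should never be formed explicitly, because the inverse of a banded matrix is generically dense; instead the blocks $\tilde L_{j,k}$ acting on a banded matrix are obtained via banded triangular solves against the factors $L_k$. Each assembly of a block $M_k$ is then $O(\ell m (\lambda+\mu))$, for a total of $O(\ell^2 m (\lambda+\mu))$. Forming $\tilde A_0 = \sum_k M_k M_k^\top$ multiplies banded matrices of constant bandwidth at cost $O(\ell m (\lambda+\mu)^2)$, and the final banded reverse Cholesky of the $m \times m$, bandwidth-$(\lambda+\mu)$ matrix $\tilde A_0$ is $O(m(\lambda+\mu)^2)$. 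Summing gives $O(np + m) = O(N)$.

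For applying $L^{-1}$ and $L^{-\top}$ to a vector, I would count the nonzeros of $L$ using its $B^3$-Arrowhead structure: $O(m(\lambda+\mu))$ in $L_0$, $O(\ell m(\lambda+\mu))$ across the first column of blocks, and $O(np\ell)$ in the interlaced direct sum $L_1 \oplus \cdots \oplus L_n$. A standard sparse triangular solve that respects this sparsity runs in $O(\mathrm{nnz}(L)) = O(m + np) = O(N)$ operations, and similarly for $L^\top$.

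The only subtle point I anticipate is the bookkeeping around the interlacing. One must treat $\tilde L$ simultaneously as a block lower-triangular matrix with $n \times n$ diagonal sub-blocks (for the $B^3$-Arrowhead structural argument) and as $n$ independent banded lower-triangular $p \times p$ factors $L_k$ (for fast solves), making sure that every occurrence of $\tilde L^{-1}$ or $\tilde L^{-\top}$ in the algorithm and in the inverse application is realised as a triangular solve against the $L_k$ rather than through an explicit inverse. Provided this is done, no fill-in is produced and the $O(N)$ bound follows immediately from the counts above.
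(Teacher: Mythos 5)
Your proposal is correct and follows essentially the same route as the paper: count the cost of each phase of Algorithm~\ref{Algorithm:ReverseCholesky} as a constant number of banded operations of total size $O(np+m)=O(N)$, then apply $L^{-1}$ and $L^{-\top}$ via sparse triangular solves exploiting the $B^3$-Arrowhead sparsity of $L$. Your extra remark about realising the blocks $\tilde L_{j,k}$ without forming $\tilde L^{-1}$ densely is a sensible refinement the paper leaves implicit (only the $O(\ell^2)$ diagonal blocks with $j,k\leq\ell$ are ever needed, coming from the top-left corners of each $L_k^{-1}$), but it does not change the argument.
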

\begin{proof}
The reverse Cholesky factorisations of  banded matrices can be computed in optimal complexity so lines (1--3) take $O(np)$ operations. Multiplying banded matrices by diagonal matrices and adding them is also optimal complexity hence lines (4--7) take $O(\max(n,m))$ operations. Finally line (8) is another banded reverse Cholesky which is $O(m)$ operations. Hence the total complexity  of the reverse Cholesky factorisation is
$
O(np + \max(n,m)) = O(N)
$
operations.

Once the factorisation is computed it is straightforward to solve linear systems in optimal complexity:  write
\[
	L = \begin{bmatrix} L_0 \\ 
		L_1 & \tilde L
	\end{bmatrix}
	\quad\hbox{so that} \quad
	L^{-1} = \begin{bmatrix} L_0^{-1} \\
		-\tilde L^{-1} L_1 L_0^{-1} & \tilde L^{-1}
	\end{bmatrix}.
\]
Since $L_0$ is banded and
$
\tilde L = L_1 \oplus \cdots \oplus L_n
$ 
where $L_k$ are banded, their inverses can be applied in optimal complexity.
\end{proof}

In  \figref{onedtims} we demonstrate the timing\footnote{All computations performed on an M2 MacBook Air with 4 threads unless otherwise stated.} for this algorithm for solving the one-dimensional  screened Poisson equation
\[
	(-\Delta + \omega^2 )u = f
\]
with a zero Dirichlet boundary condition which is discretised via 
\[
	(-\Delta_Q + \omega^2 M_Q)u = R^\top M_P \bf f
\]
where $\bf f$ are given Legendre coefficients. 
We choose $\omega = 1$ and $f$ is random samples as these do not impact the speed of the simulation.  The first plot shows the precomputation cost: building the discretisation and computing its Cholesky factorisation, achieving optimal complexity. The second plot shows  the solve time, which is also optimal complexity. The timings of both are roughly independent of $n$, the number of elements, demonstrating uniform computational cost.

\begin{figure}[ht!]
\centering
\includegraphics[width =0.45 \textwidth]{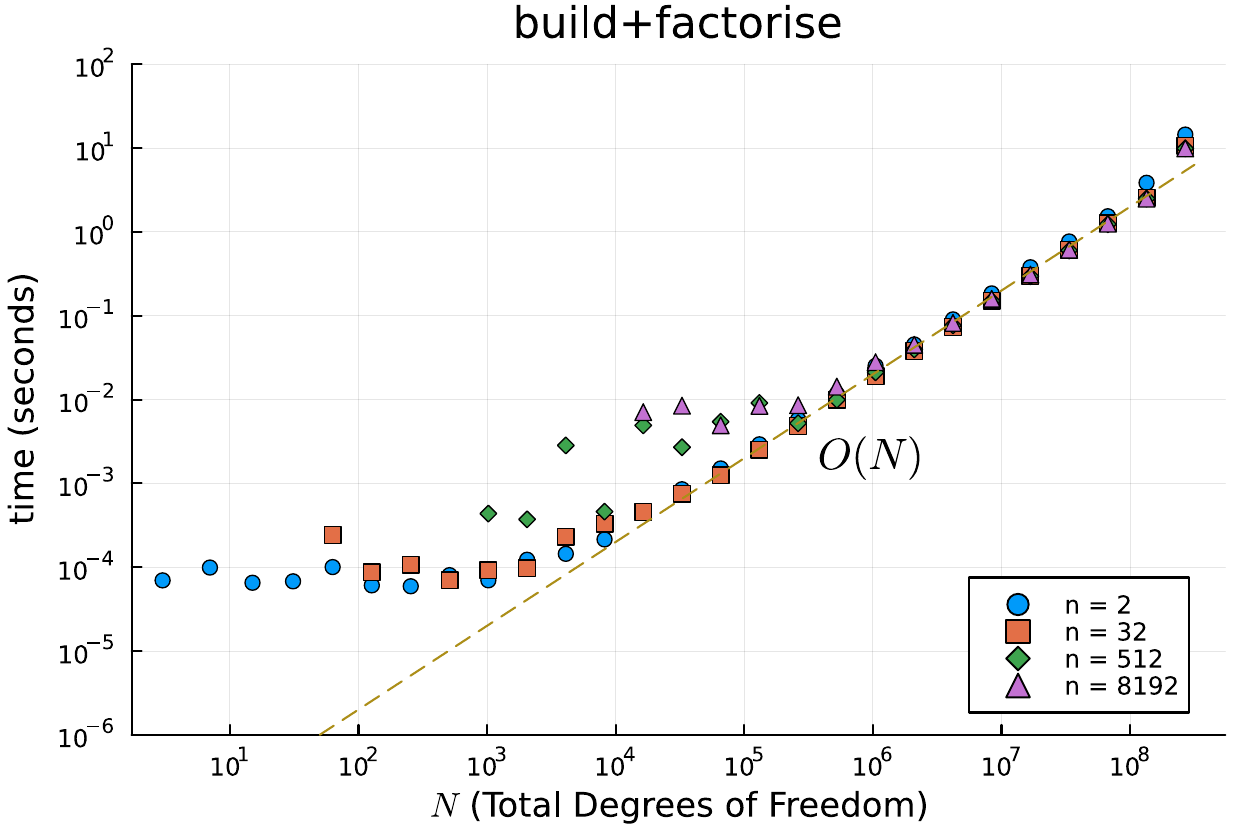}
\includegraphics[width =0.45 \textwidth]{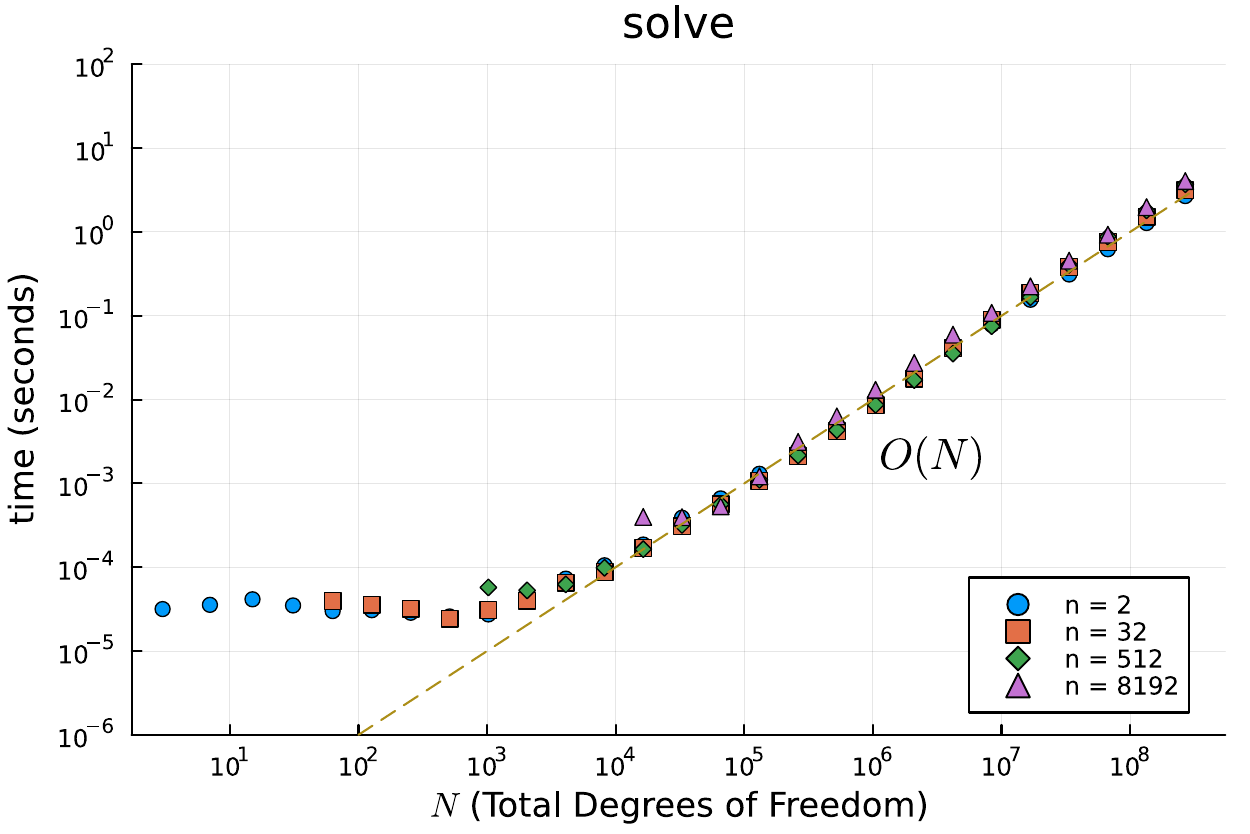}
\caption{Time taken to build/factorise and solve a discretisation of a 1D (screened) Poisson equation up to degree $p$, where $n$ is the number of elements. The $x$-axis is $N = np-1$, the total  number of degrees of freedom and demonstrates that the complexity is optimal as either $n\ (= 2/h)$ or $p$ become large, and largely only depends on the total number of degrees of freedom.  \label{Figure:onedtims}}
\end{figure}

\begin{remark}
An $O(N)$ solve for the matrix induced by the FEM discretisation of the one-dimensional screened Poisson equation is also admissible via \emph{static condensation} \cite[Ch.~3.2]{Schwab1998}.
\end{remark}

In \figref{onedtims_multithread} we demonstrate the speed-up observed in computing factorisations and solves when parallelised over multiple cores, showing evidence of strong scaling for the factorisation problem. Note in 2D the communication costs mitigate any improvement from parallelisation in the current implementation, and so to effectively take advantage of the potential to parallelise would likely require a more robust distributed memory implementation.

\begin{figure}[ht!]
\centering
\includegraphics[width =0.45 \textwidth]{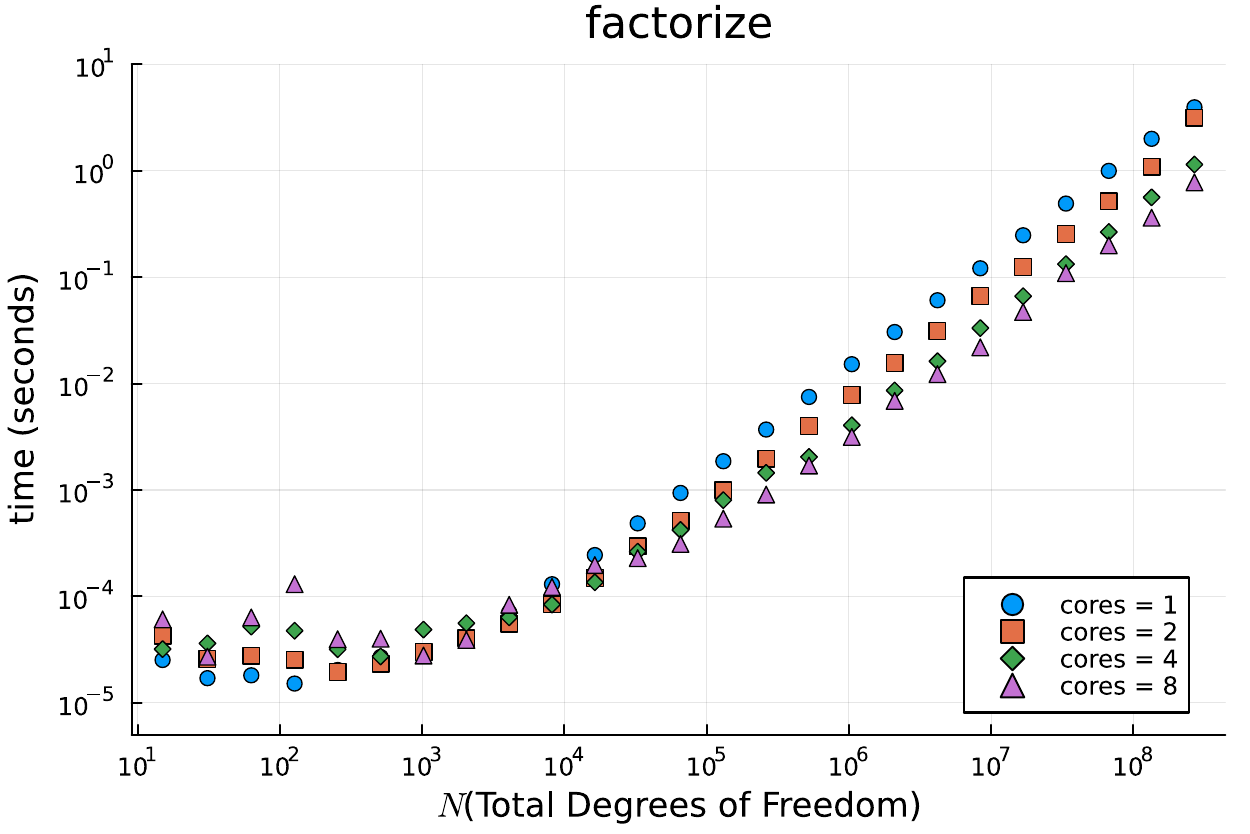}
\includegraphics[width =0.45 \textwidth]{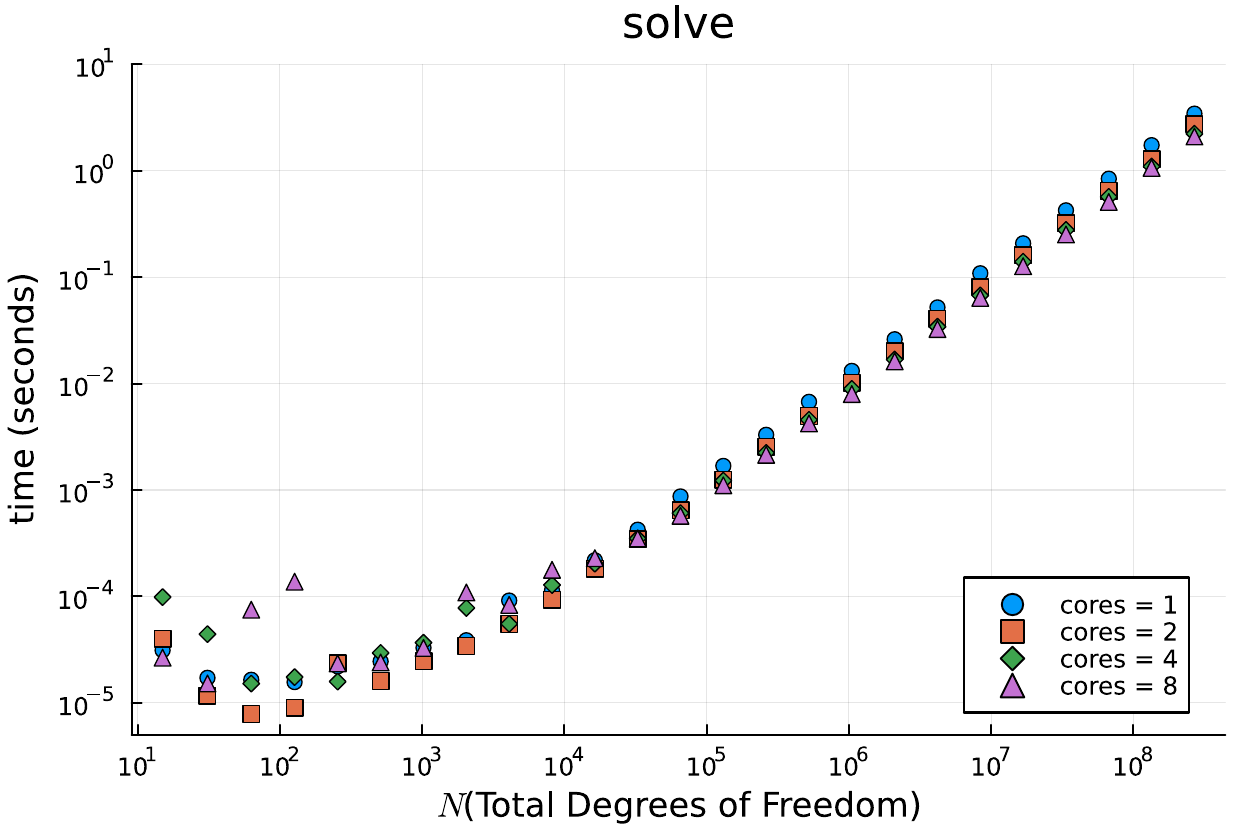}
\caption{Time taken to factorise and solve a 1D (screened) Poisson equation up to degree $p$, where $n= 8$ is the number of elements, where we parallelise over multiple cores, on an M2 MacBook Air with 4 high performance and 4 high efficiency cores, using shared memory. We see substantial speedup for factorisation when utilising the 4 high performance cores whilst a marginal speedup when also using the high efficiency cores.
\label{Figure:onedtims_multithread}}
\end{figure}

\section{A generalised Alternating Direction Implicit (ADI) method}\label{Section:ADI}

Recall from \cref{sec:helmholtz-2d}, the generalised Sylvester equation for the two-dimensional screened Poisson equation (dropping the superscripts $^{\bf x}$ and $^{\bf y}$) is
\begin{align}
\begin{split}
\left(-\Delta_{Y,p} + \frac{\omega^2}{2} M_{Y,p}  \right) U_{pp} M_{Y,p} &+ M_{Y,p} U_{pp}  \left(-\Delta_{Y,p} + \frac{\omega^2}{2} M_{Y,p}  \right)  \\
&= R_{Y,p}^\top M_{P,p} F_{pp} M_{P,p} R_{Y,p} \eqqcolon G_{pp}.
\end{split}
\label{eq:adi:poisson}
\end{align}
Here ${\bf Y}_{0:p}(x) = {\bf C}_{0:p}(x)$ if we consider the screened Poisson equation with a zero Neumann boundary condition and $\omega^2 > 0$. Otherwise ${\bf Y}_{0:p}(x) = {\bf Q}_{0:p}(x)$ if we impose a zero Dirichlet boundary condition and $\omega^2 \geq 0$.

We will solve \eqref{eq:adi:poisson} using a variant of the Alternating Direction Implicit (ADI) method  cf.~\cite{peaceman1955,fortunato2020fast,douglas1963,douglas1962,douglas1964,bruno2010}. The first use of ADI to solve PDEs is attributed to \cite{peaceman1955} and has seen many extensions e.g.~to hyperbolic and nonlinear problems as well as non-Cartesian domains \cite{bruno2010}.  ADI is an iterative approach to approximate $X$ that solves the Sylvester equation $AX - XB = F$, but in a manner that permits  precise error control: given two assumptions on real-valued matrices $A$ and $B$, one is able to explicitly find the number of iterations required for the algorithm to compute $X$ up to a maximum tolerance $\epsilon$. The two assumptions are \cite{fortunato2020fast}:
\begin{enumerate}[label={P}\arabic*.]
\itemsep=0pt
\item $A$ and $B$ are symmetric matrices; \label{as:P1}
\item There exist real disjoint nonempty intervals $[a, b]$ and $[c, d]$ such that $\sigma(A) \subset [a, b]$ and $\sigma(B) \subset [c, d]$, where $\sigma$ denotes the
spectrum of a matrix. \label{as:P2}
\end{enumerate}

\begin{algorithm}[tb]
	\caption{Generalised Alternating Direction Implicit (ADI)}\label{Algorithm:ADI}
	
	{\noindent\bf Input:} {Symmetric matrices $A \in \bbR^{M \times M}$, $B \in \bbR^{N \times N}$,  $C \in \bbR^{N \times N}$,  and  $D \in \bbR^{M \times M}$,  matrix $F \in \bbR^{M \times N}$, tolerance $\epsilon$.}
	
	{\noindent\bf Output:} {Matrix $U \in \bbR^{M \times N}$ satisfying   $A U C -  D U B  \approx F$. }
	
	\smallskip
	{\bf Precomputation}:
\begin{algorithmic}[1]
	\STATE Use the banded symmetric generalised eigenvalue algorithms \cite{crawford1973reduction} to compute generalised eigenvalues  $\sigma(A, D)$  and  $\sigma(B, C)$, where $\sigma(A,B) := \{\lambda : \|(A - \lambda B)^{-1}\| = \infty\}$. The largest and smallest eigenvalues
	give us $a, b, c, d$ such that $\sigma(\tilde A) \subset [a, b]$ and $\sigma(\tilde B) \subset [c, d]$. 
	\STATE Let the number of iterations equal $J = \lceil \log(16 \gamma) \log(4/\epsilon)/\pi^2 \rceil$ where $\gamma = |c-a||d-b|/(|c-b||d-a|)$.
	\STATE Compute the ADI shifts $p_j$ and $q_j$ which have explicit formul\ae\ depending on $\gamma$ \cite[Eq.~(2.4)]{fortunato2020fast}. Notably, we have that $p_j > 0$ and $q_j < 0$ for all $j \in \{1,\ldots,J\}$.
	\FOR{$j = 1,\ldots,J$}
	\STATE Compute the reverse Cholesky factorisations of $ A -  q_j  D $ and $ B - p_j  C $.
	\ENDFOR
	\end{algorithmic}

{\bf Solve}:
\begin{algorithmic}[1]
	\STATE Let $W_0 = \bfzero$
	\FOR{$j = 1,\ldots,J$}
	\STATE Use the precomputed Cholesky factorisations to compute
\begin{align*}
	W_{j-1/2} &=  (F - (A - p_j D)W_{j-1}) (B-p_j C)^{-1},  \\
	W_{j} &=  (A-q_j D)^{-1}(F - W_{j-1/2}(B-q_j C)).  
\end{align*}
	\ENDFOR
	\RETURN $W_J  C^{-1}$.
\end{algorithmic}
\end{algorithm}

The algorithm proceeds iteratively. First one fixes the initial matrix $X_0 = 0$. Then, iteratively for $j =  1,\dots,J$, we compute
\begin{alignat}{2}
\text{for $X_{j-1/2}$ solve \;\;\;\;}& X_{j-1/2} (B - p_j I) &&= F - (A-p_jI) X_{j-1} \label{eq:adi:1},\\ 
\text{for $X_{j}$ solve \;\;\;\;}& (A-q_jI)X_{j} &&= F - X_{j-1/2}(B-q_j I). \label{eq:adi:2}
\end{alignat}

\subsection{Generalised ADI}

In the case of the 2D (Screened) Poisson equation \eqref{eq:weak-form} we have a generalised Sylvester equation which we write in general form as:
\[
	A U C - D U B = F.
\]
We first extend the ADI method to generalised problems in \algref{ADI}. 

\begin{lemma}
Write $C = L^\top L$ and $D = V^\top V$ where $L$ and $V$ are lower triangular.  \algref{ADI} computes $U_J$ satisfying 
\[
	\norm{V (U - U_J) L^\top} \leq \epsilon \norm{V U L^\top}.
\]

\end{lemma}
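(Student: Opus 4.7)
The plan is to reduce the generalised Sylvester equation $A U C - D U B = F$ to a standard one via a change of variables, apply the classical ADI convergence theorem to the transformed problem, and then translate the resulting bound back.

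First I would set $X \coloneqq V U L^\top$. Using $C = L^\top L$ and $D = V^\top V$, pre-multiplying the generalised equation by $V^{-\top}$ and post-multiplying by $L^{-1}$ turns it into the standard Sylvester equation $\tilde A X - X \tilde B = \tilde F$, where
\[
\tilde A \coloneqq V^{-\top} A V^{-1}, \qquad \tilde B \coloneqq L^{-\top} B L^{-1}, \qquad \tilde F \coloneqq V^{-\top} F L^{-1}.
\]
Both $\tilde A$ and $\tilde B$ are symmetric, so hypothesis \ref{as:P1} holds. Since $\det(\tilde A - \lambda I) = \det(V^{-\top}) \det(A - \lambda D) \det(V^{-1})$, we have $\sigma(\tilde A) = \sigma(A, D) \subset [a, b]$, and analogously $\sigma(\tilde B) = \sigma(B, C) \subset [c, d]$, so \ref{as:P2} holds with the same intervals used in the precomputation of \algref{ADI}. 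Consequently, the shifts $p_j, q_j$ and iteration count $J$ chosen by the algorithm are exactly those required by the standard ADI convergence theorem applied to $\tilde A X - X \tilde B = \tilde F$.

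Next I would verify that the iterates $(W_{j-1/2}, W_j)$ produced by \algref{ADI} coincide, after the change of variables, with the standard ADI iterates $(X_{j-1/2}, X_j)$ from \cref{eq:adi:1}--\cref{eq:adi:2} applied to the transformed problem. Writing each standard iterate in the form $X_\bullet = V U_\bullet L^\top$, substituting into the standard recursion, then pre-multiplying by $V^\top$ and post-multiplying by $L$, the cancellations $V^\top V^{-\top} = I$, $V^\top V = D$, $L^\top L = C$ give
\begin{align*}
D U_{j-1/2} (B - p_j C) &= F - (A - p_j D) U_{j-1} C, \\
(A - q_j D) U_j C &= F - D U_{j-1/2} (B - q_j C).
\end{align*}
Comparing with \algref{ADI} then yields the identifications $W_j = U_j C$ at integer steps and $W_{j-1/2} = D U_{j-1/2}$ at half-integer steps, making the two iterations identical. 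In particular $U_J = W_J C^{-1}$, matching the return value of the algorithm.

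Finally, the classical ADI convergence theorem \cite{fortunato2020fast}, applied with the shifts and $J$ chosen by the algorithm, yields $\norm{X - X_J} \leq \epsilon \norm{X}$ for the transformed problem. Since $X - X_J = V(U - U_J) L^\top$ and $X = V U L^\top$, this is precisely the bound claimed in the lemma. The main obstacle is the middle step: although the change of variables $X = V U L^\top$ is a single similarity, the half-iterate $W_{j-1/2}$ and the full iterate $W_j$ end up encoding \emph{different} scalings of their standard counterparts ($D U_{j-1/2}$ versus $U_j C$), and one must track this asymmetry carefully when matching the generalised recursion to the standard one. Once this bookkeeping is settled, the remainder of the argument follows directly from the standard theory.
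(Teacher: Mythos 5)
Your proposal is correct and follows essentially the same route as the paper's proof: the substitution $X = VUL^\top$, the observation that $\sigma(\tilde A)=\sigma(A,D)$ and $\sigma(\tilde B)=\sigma(B,C)$, the appeal to the standard ADI convergence bound of \cite{fortunato2020fast}, and the identifications $W_j = U_jC = V^{-1}X_jL$ and $W_{j-1/2} = DU_{j-1/2} = V^\top X_{j-1/2}L^{-\top}$ are exactly the ingredients the paper uses. Your version is in fact slightly more explicit about verifying the correspondence between the generalised and standard iterates, which the paper states without detailed computation.
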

\begin{proof}
We reduce a generalised Sylvester equation to a standard Sylvester equation as follows:  define
$
X \coloneqq V U L^\top
$
so that our equation becomes
\[
	\underbrace{V^{-\top} A V^{-1}}_{\tilde A} X   -    X \underbrace{L^{-\top} B L^{-1}}_{\tilde B} = \underbrace{V^{-\top}  F L^{-1}}_G.
\]
$\tilde A$ and $\tilde B$ are symmetric matrices whose eigenvalues satisfy $\sigma(\tilde A) =  \sigma(A, D)$  and $\sigma(\tilde B) =  \sigma(B, C)$. The ADI iterations satisfy, for $X_0 = 0$,
\begin{align*}
X_{j-1/2} (\tilde B - p_j I) &= G - (\tilde A-p_jI) X_{j-1}, \\
(\tilde A-q_jI)X_{j} &= G - X_{j-1/2}(\tilde B-q_j I) ,
\end{align*}
where by convergence of the ADI algorithm \cite[Th.~2.1]{fortunato2020fast}:
\begin{align}
\| X - X_J \| \leq \epsilon \| X \|.
\label{eq:adi:8}
\end{align}
Writing $W_j \coloneqq V^{-1} X_j L$ and $W_{j+1/2} \coloneqq V^\top X_{j+1/2} L^{-\top}$ this iteration becomes equivalent to that of \algref{ADI}.  We thus have, for $U_J = W_J C^{-1} = V^{-1} X_J L^{-\top}$, that 
\[
	\norm{V (U - U_J) L^\top}  = \norm{X - X_J}  \leq \epsilon \norm{X} = \epsilon \norm{V U L^\top}.
\]

\end{proof}

In \figref{discont} we show the solution for a discontinuous-right hand side using ADI
with a fixed $h$ and high $p$ to compute the solution. \figref{disconttims} we show the
computational cost of \algref{ADI} for different $h$ and $p$. This shows that in practice
we achieve quasi-optimal complexity, both for the precomputation and the solve. Finally in \figref{neumann} we show the solve time remains quasi-optimal for a zero Neumann boundary condition, and that
the computational cost  improves as $\omega$ increases.

\begin{figure}[ht!]
	\centering
	\includegraphics[width =0.45 \textwidth]{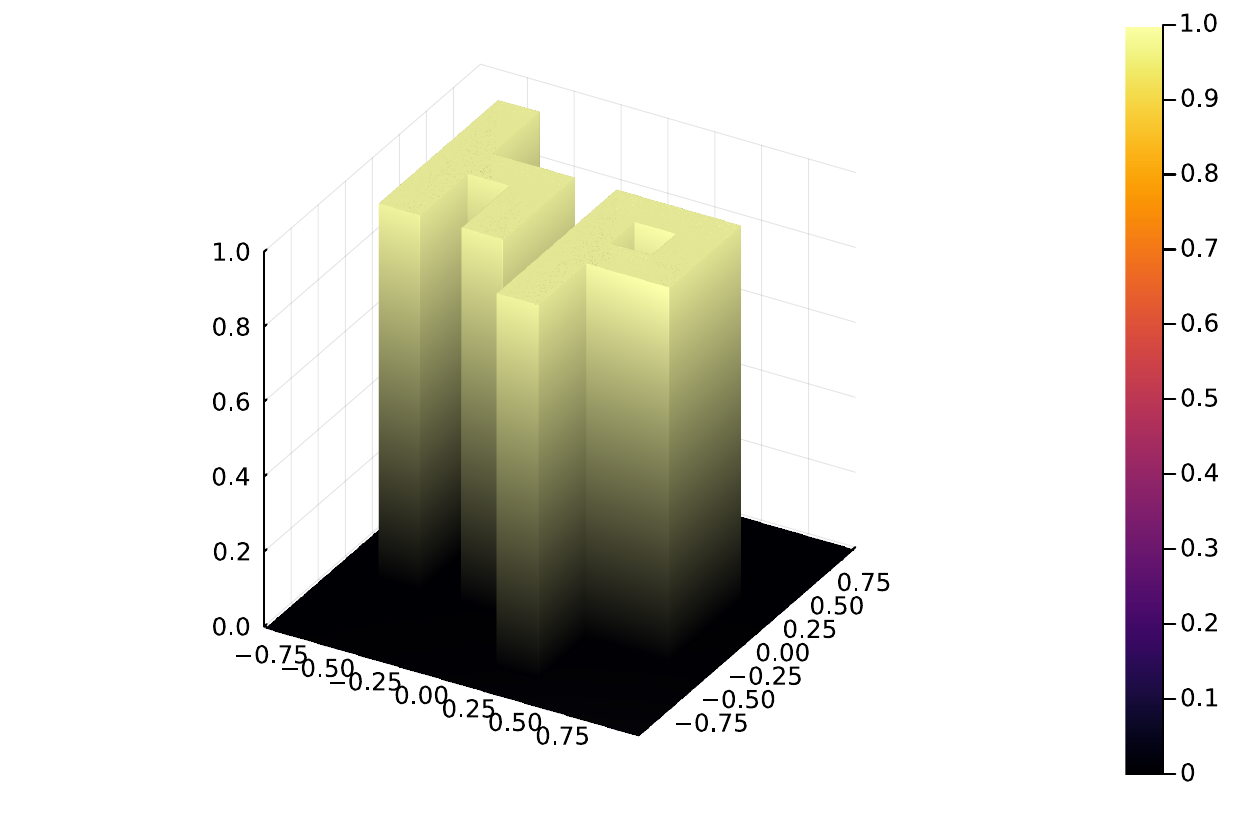}
	\includegraphics[width =0.45 \textwidth]{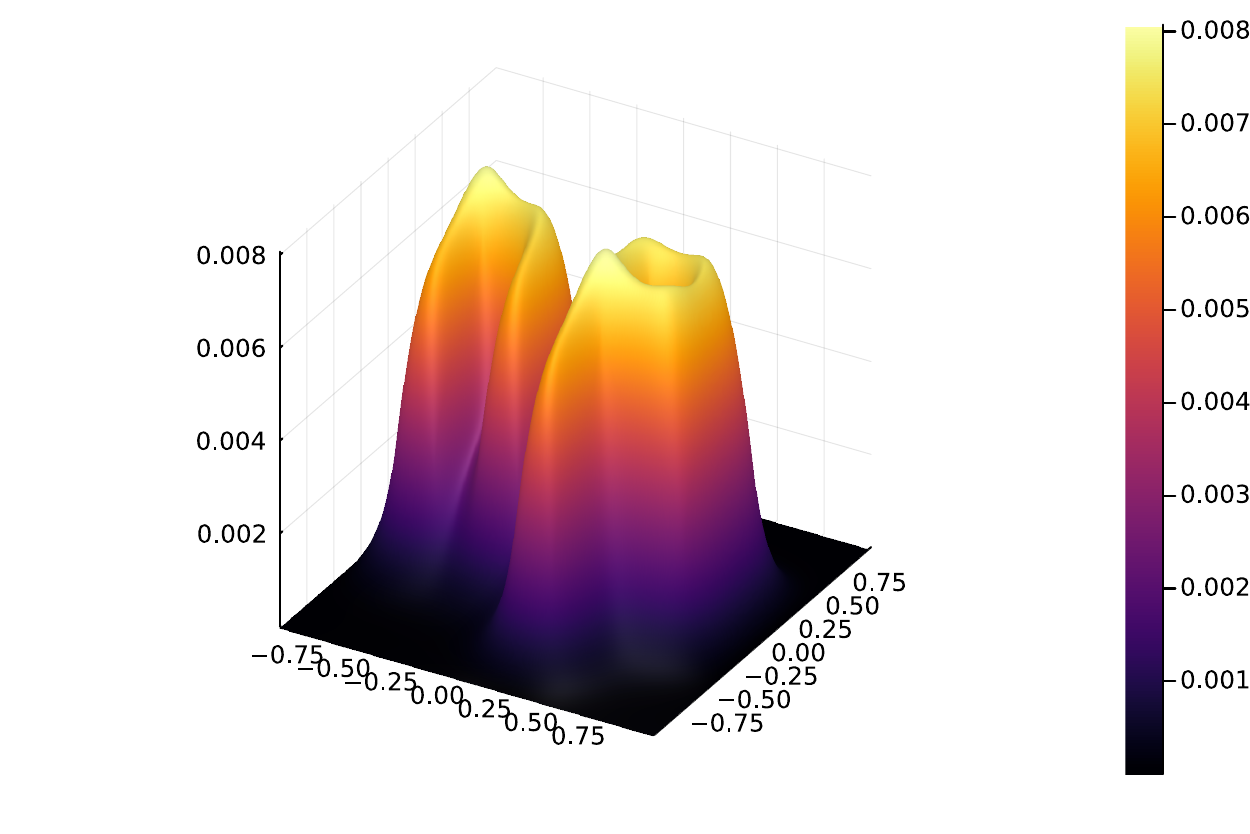}
	\caption{Example PDE with a discontinuous right-hand side $f$ (left) and solution (right) of the screened Poisson equation $-\Delta u+10^2 u  = f$ with a zero Dirichlet boundary condition. By using a $9 \times 9$ elements we can resolve the right-hand side
	exactly, and then use high $p$ to achieve convergence. \label{Figure:discont}}
	\end{figure}

\begin{figure}[ht!]
\centering
\includegraphics[width =0.45 \textwidth]{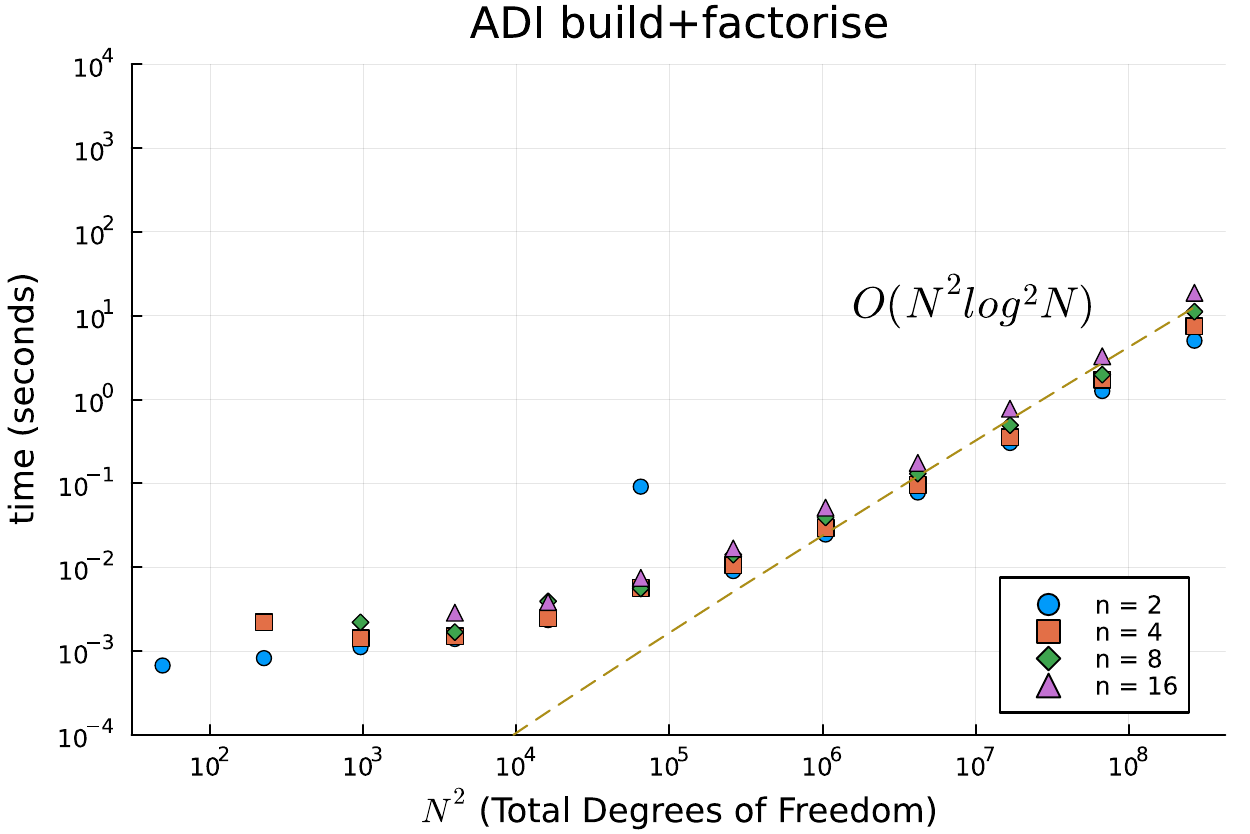}
\includegraphics[width =0.45 \textwidth]{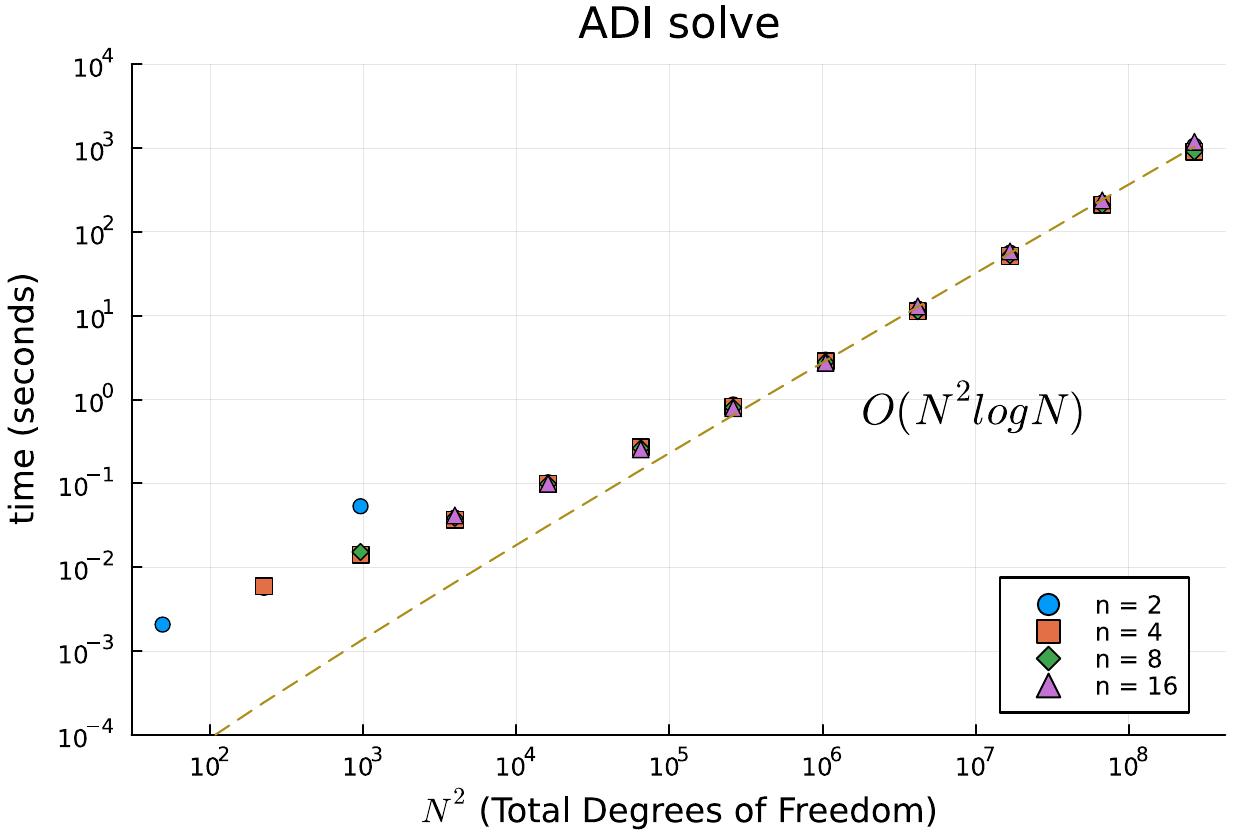}
\caption{Time taken to build/factorise and solve a discretisation of the Poisson equation in 2D using ADI up to degree $p$, where $n$ is the number of elements. The $x$-axis in the second row of timings is the total  number of Degrees of  Freedom (DOF) and demonstrates that the complexity is optimal as either $n (= 2/h)$ or $p$ become large, and largely only depends on the total number of DOF. \label{Figure:disconttims}}
\end{figure}

\begin{figure}[ht!]
\centering
\includegraphics[width =0.45 \textwidth]{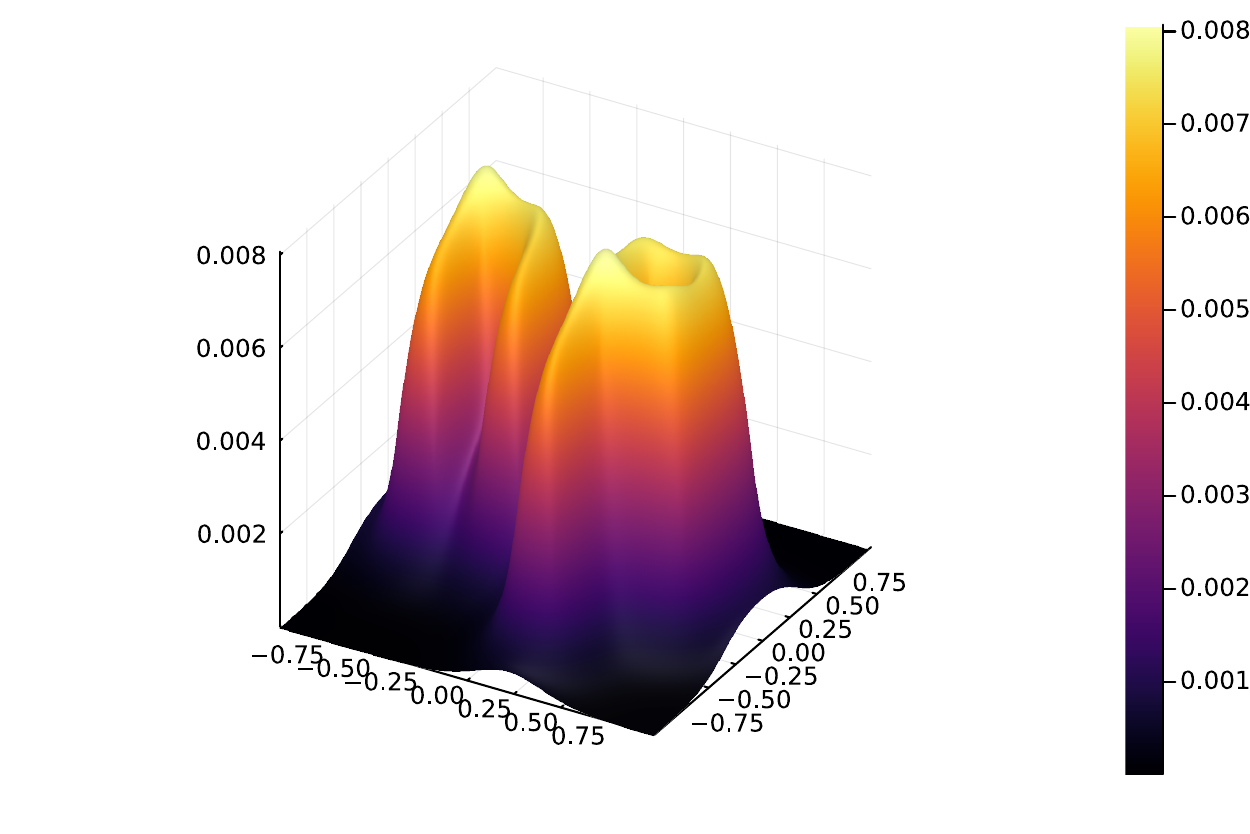}
\includegraphics[width =0.45\textwidth]{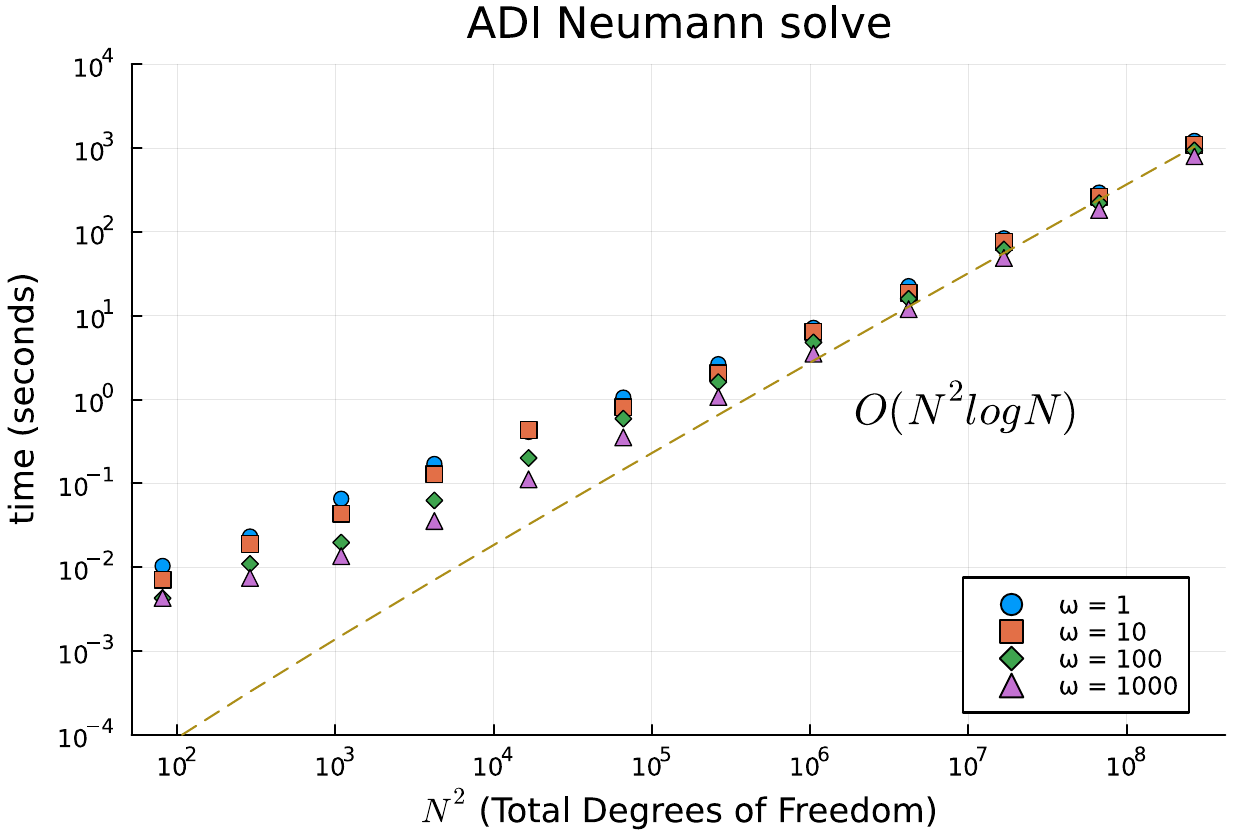}
\caption{Left: Solution of $-\Delta u+10^2 u  = f$ with a zero Neumann boundary condition, using the same right-hand side as in \figref{discont}.
Right: Solve timings for $n = 2$ (two elements) and increasing $p$ with varying choices of $\omega$.\label{Figure:neumann}}
\end{figure}

\subsection{Complexity analysis}

In the previous experiments we observed that \algref{ADI} appears to achieve quasi-optimal complexity. In this section we prove this is guaranteed to be the case. In order to control the complexity, it is necessary to control the number of iterations $J$ which depends on the spectral information of the operators. 
For simplicity, throughout this section we set $p=q$, i.e~we consider an equal discretisation degree in $x$ and $y$. However, we note that all the results generalise. 

A key result to derive the complexity of applying the ADI algorithm solely in terms of $N$ and $p$  are bounds on the spectrum for $L^{-\top} M_{Y,p} L^{-1}$ where $L^\top L = -\Delta_{Y,p} + (\omega^2/2) M_{Y,p}$. This allows us to derive the asymptotic behaviour for $J$.
\begin{lemma}[Spectrum]
\label{Lemma:eigs}
Consider the interval domain $(a,b)$ and a family of quasi-uniform subdivisions $\{\mathcal{T}_h\}_h$ of the interval, where $h$ denotes the mesh size (the minimum diameter of all the cells in the mesh) \cite[Def.~4.4.13]{Brenner2008}. For the (screened) Poisson equation with Neumann boundary conditions consider the quasimatrix ${\bf Y}_{0:p}(x) = {\bf C}_{0:p}(x)$ and with a zero Dirichlet boundary condition consider ${\bf Y}_{0:p}(x) = {\bf Q}_{0:p}(x)$ where $p$ is the truncation degree on each element. Suppose that $L^\top L = A_{Y,p} \coloneqq (-\Delta)_{Y,p} + (\omega^2/2) M_{Y,p}$ where $\omega \neq 0$ in the case of Neumann boundary conditions. Then
\begin{align}
\sigma(L^{-\top} M_{Y,p} L^{-1}) \subseteq \left[\frac{2 h^2}{24 p^{4} + \omega^2 h^2}, C \right],
\end{align}
where $C = \min( (b-a)^2/\pi^2, \max(1, 2/\omega^2))$ in the case of a zero Dirichlet boundary condition and $C = \max(1, 2/\omega^2)$ in the case of a zero Neumann boundary condition.
\end{lemma}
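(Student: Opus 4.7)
The plan is to diagonalise the pencil $(M_{Y,p}, A_{Y,p})$ via Rayleigh quotients and bound these from above by Poincar\'e/denominator comparisons and from below by an $hp$-inverse estimate. Since $L$ is invertible, the eigenvalues of $L^{-\top}M_{Y,p}L^{-1}$ coincide with the generalised eigenvalues of $M_{Y,p}\mathbf v = \lambda A_{Y,p}\mathbf v$, so each eigenvalue equals some Rayleigh quotient
\[
\lambda(\mathbf v) = \frac{\mathbf v^\top M_{Y,p} \mathbf v}{\mathbf v^\top A_{Y,p} \mathbf v} = \frac{\|u_p\|_{L^2(a,b)}^2}{\|u_p'\|_{L^2(a,b)}^2 + \frac{\omega^2}{2}\|u_p\|_{L^2(a,b)}^2},
\]
where $u_p = \mathbf Y_{0:p}\mathbf v$ ranges over the $hp$-FEM trial space on $\mathcal{T}_h$ with the appropriate boundary condition.

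\textbf{Upper bound.} In the Dirichlet case Poincar\'e on $(a,b)$ gives $\|u_p\|^2 \leq (b-a)^2/\pi^2 \cdot \|u_p'\|^2$, hence $\lambda \leq (b-a)^2/\pi^2$; the denominator alone gives $\lambda \leq 2/\omega^2$ whenever $\omega\neq 0$; and when $\omega \geq \sqrt{2}$ we have $\|u_p'\|^2 + \tfrac{\omega^2}{2}\|u_p\|^2 \geq \|u_p\|^2$, hence $\lambda \leq 1$. Packaging these produces $\min((b-a)^2/\pi^2, \max(1, 2/\omega^2))$ as stated. The Neumann case loses Poincar\'e but retains the last two bounds, giving $C = \max(1, 2/\omega^2)$.

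\textbf{Lower bound.} Rearranging $\lambda \geq 2h^2/(24p^4 + \omega^2 h^2)$ reveals it is equivalent (after the $\omega^2 h^2\|u_p\|^2$ term cancels between numerator and denominator) to the $hp$-inverse estimate
\[
\|u_p'\|_{L^2(a,b)}^2 \leq \frac{12\, p^4}{h^2} \, \|u_p\|_{L^2(a,b)}^2
\]
over the entire trial space. I would first establish the reference-interval bound $\|v'\|_{L^2(-1,1)}^2 \leq 3 p^4 \|v\|_{L^2(-1,1)}^2$ for any polynomial $v$ of degree $\leq p$ satisfying the relevant endpoint conditions, then pull it back to each mesh element via the affine map $a_j$ (which contributes a factor $4/h_j^2 \leq 4/h^2$ because $h$ is the minimum element diameter), and finally sum over elements using the elementwise decoupling of $\mathbf P^{\bfx}$.

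\textbf{Main obstacle.} The framework is standard, so the principal difficulty is pinning down the sharp reference-element Markov constant $3p^4$: the coarse estimate $\|v'\|^2 \lesssim p^4\|v\|^2$ is classical, but the explicit factor $12$ in the statement requires careful bookkeeping. The integrated-Legendre identity $W_k' = -P_{k+1}$ makes $\|u'\|^2$ \emph{diagonal} in the bubble coefficients $c_k$, so one can obtain the sharp constant by bounding the smallest generalised eigenvalue of the pentadiagonal mass $M_W$ against the diagonal stiffness $(D_W^P)^\top M_P D_W^P$ on $[-1,1]$---a computation made tractable by the explicit closed-form entries recorded in \appref{Recurrences}, with the hat-function endpieces of the Dirichlet basis handled by an analogous direct calculation.
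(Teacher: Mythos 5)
Your argument is correct and follows the paper's proof essentially verbatim: the same Rayleigh-quotient identity for the pencil $(M_{Y,p},A_{Y,p})$, the same three-case upper bound via the optimal Poincar\'e constant and the $\omega^2/2$ term in the denominator, and the same reduction of the lower bound to an elementwise $hp$-inverse inequality scaled by the minimum cell diameter. The one step you leave open as the ``main obstacle''---the reference-element constant $\|v'\|_{L^2(-1,1)}^2 \le 3p^4\|v\|_{L^2(-1,1)}^2$---is exactly the inverse estimate $|\pi_p|_{H^1(0,h)} \le 2\sqrt{3}\,h^{-1}p^2\|\pi_p\|_{L^2(0,h)}$ that the paper simply cites from Schwab (Th.~3.91), valid for all of $\mathbb{P}_p$ with no endpoint conditions (and none should be imposed, since the local trial space on each cell is all of $\mathbb{P}_p$), so the generalised-eigenvalue computation on $M_W$ that you sketch is unnecessary.
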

\begin{proof}
The eigenvalue problem to consider is
\begin{align}
L^{-\top} M_{Y,p} L^{-1} {\bf v}_p = \lambda {\bf v}_p,
\label{eq:spec1}
\end{align}
where $\lambda$ and ${\bf v}_p$ denote an eigenvalue and corresponding eigenvector, respectively. First note that $L^{-\top} M_{Y,p} L^{-1}$ is congruent to a symmetric positive-definite matrix and, therefore, $\lambda$ must be real and positive. Left multiplying \cref{eq:spec1} by $L^\top$, and considering ${\bf w}_p = L^{-1} {\bf v}_p$, we deduce that
\begin{align}
M_{Y,p} {\bf w}_p = - \lambda \Delta_{Y,p} {\bf w}_p + \frac{\lambda \omega^2}{2} M_{Y,p}{\bf w}_p.
\label{eq:spec2}
\end{align}
For $w \in H^1(a,b)$, let $|w|_{H^1(a,b)} \coloneqq \|w'\|_{L^2(a,b)}$.  Left multiplying \cref{eq:spec2} by ${\bf w}^\top_p$ implies that
\begin{align}
\| {\bf Y}_{0:p} {\bf w}_p\|^2_{L^2(a,b)} = \lambda | {\bf Y}_{0:p} {\bf w}_p|^2_{H^1(a,b)} + \frac{\lambda \omega^2}{2} \| {\bf Y}_{0:p} {\bf w}_p\|^2_{L^2(a,b)}.
\label{eq:spec3}
\end{align}

\noindent \textbf{(Upper bound).} The upper bound can be split into three cases: (I)  a zero Dirichlet boundary condition, (II) $0<\omega^2 < 2$, and (III) $\omega^2 \geq 2$. In case (I) then ${\bf Y}_{0:p}(a){\bf w}_p = {\bf Y}_{0:p}(b) {\bf w}_p = {\bf Q}_{0:p}(a){\bf w}_p = {\bf Q}_{0:p}(b) {\bf w}_p= 0$. Hence the Poincar\'e inequality (with the optimal Poincar\'e constant) implies that \cite{payne1960optimal}
\begin{align}
\frac{\pi^2}{(b-a)^2}\| {\bf Y}_{0:p} {\bf w}_p\|^2_{L^2(a,b)}  \leq  | {\bf Y}_{0:p} {\bf w}_p|^2_{H^1(a,b)}  \leq \lambda^{-1} \| {\bf Y}_{0:p} {\bf w}_p\|^2_{L^2(a,b)}.
\end{align}
Thus $\lambda \leq (b-a)^2/\pi^2$. In cases (II) and (III) we see that
\begin{align}
\begin{split}
C^{-1} \| {\bf Y}_{0:p} {\bf w}_p\|^2_{L^2(a,b)}
& \leq | {\bf Y}_{0:p} {\bf w}_p|^2_{H^1(a,b)} + \frac{\omega^2}{2} \| {\bf Y}_{0:p} {\bf w}_p\|^2_{L^2(a,b)}\\
&=  \lambda^{-1} \| {\bf Y}_{0:p} {\bf w}_p\|^2_{L^2(a,b)},
\end{split}
\end{align}
where $C^{-1} = \omega^2/2$ in case (II) and $C^{-1} = 1$ in case (III). Thus $\lambda \leq \max(1,2/\omega^2)$. Combining the results from cases (I)--(III), we conclude the upper bound on the spectrum. 

\noindent \textbf{(Lower bound).} Consider a degree $p$ polynomial $\pi_p$  defined on the interval $(0,h)$. Then the following inverse inequality holds \cite[Th.~3.91]{Schwab1998}:
\begin{align}
| \pi_p |_{H^1(0,h)} \leq 2 \sqrt{3} h^{-1} p^2 \|\pi_p\|_{L^2(0,h)}.
\label{eq:spec4}
\end{align}
Consequently, \cref{eq:spec4} implies that
\begin{align}
\begin{split}
\lambda^{-1} \| {\bf Y}_{0:p} {\bf w}_p\|^2_{L^2(a,b)} 
&= | {\bf Y}_{0:p} {\bf w}_p|^2_{H^1(a,b)} + \frac{\omega^2}{2} \| {\bf Y}_{0:p} {\bf w}_p\|^2_{L^2(a,b)} \\
& \leq 12 h^{-2} p^4 \| {\bf Y}_{0:p} {\bf w}_p\|^2_{L^2(a,b)} + \frac{\omega^2}{2} \| {\bf Y}_{0:p} {\bf w}_p\|^2_{L^2(a,b)}.
\end{split}
\label{eq:spec5}
\end{align}
Hence $\lambda  \geq \frac{2 h^2}{24  p^{4} + \omega^2 h^2}$. 
\end{proof}

From the formula of $J$ we arrive at the following:

\begin{lemma}
	Under the conditions of the previous proposition, $ J  = O(\log N \log \epsilon^{-1})$.
\end{lemma}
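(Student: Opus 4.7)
The plan is to feed the spectral bounds of \lmref{eigs} into the explicit formula $J = \lceil \log(16\gamma)\log(4/\epsilon)/\pi^2 \rceil$ from \algref{ADI} and track the dependence of $\gamma$ on $N$. First I would put the 2D Sylvester equation \cref{eq:adi:poisson} (with $p=q$) into the canonical form $AUC - DUB = F$ by choosing $A = B = A_{Y,p}$, $C = M_{Y,p}$, and $D = -M_{Y,p}$. Under this identification the generalised spectra required by the algorithm are $\sigma(A,D) = -\sigma(A_{Y,p}, M_{Y,p})$ and $\sigma(B,C) = \sigma(A_{Y,p}, M_{Y,p})$, so the two enclosing intervals are symmetric about the origin, say $[a,b] = [-d,-c]$ and $[c,d]$, and they are automatically disjoint.

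Second, I would relate $\sigma(A_{Y,p}, M_{Y,p})$ to the spectrum bounded in \lmref{eigs}. Since $L^\top L = A_{Y,p}$, we have $\sigma(L^{-\top} M_{Y,p} L^{-1}) = \sigma(M_{Y,p}, A_{Y,p})$, and each eigenvalue $\lambda$ there corresponds to $1/\lambda \in \sigma(A_{Y,p}, M_{Y,p})$. Inverting the interval from \lmref{eigs} yields
\begin{equation*}
\sigma(A_{Y,p}, M_{Y,p}) \subseteq \left[\frac{1}{C_0},\; \frac{24 p^4 + \omega^2 h^2}{2 h^2}\right] \eqqcolon [c,d],
\end{equation*}
where $C_0$ is the constant from \lmref{eigs}.

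Third, I would evaluate $\gamma$ in closed form for the symmetric configuration. Using $a=-d$, $b=-c$, we have $|c-a| = |d-b| = c+d$, $|c-b| = 2c$, and $|d-a| = 2d$, so
\begin{equation*}
\gamma = \frac{(c+d)^2}{4cd} \leq \frac{d}{c} = O\!\left(\frac{p^4}{h^2}\right),
\end{equation*}
where the last estimate uses that $d$ dominates $c$ asymptotically and $\omega$ is fixed. For a quasi-uniform family with $h = O(1/n)$ and $N = pn - 1$, we have $p^4/h^2 = O(p^4 n^2) = O(p^2 (pn)^2) = O(p^2 N^2) = O(N^4)$, since $p \leq N+1$. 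Hence $\log(16\gamma) = O(\log N)$ and the claimed bound $J = O(\log N \log \epsilon^{-1})$ follows.

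The main obstacle is a bookkeeping one: ensuring the identification of the two ADI intervals is compatible with the hypotheses \ref{as:P1}--\ref{as:P2} (in particular that they are disjoint), and verifying that the edge cases permitted by \lmref{eigs}---Dirichlet with $\omega = 0$, or Neumann with $\omega \neq 0$---still give a finite $C_0$ so that $c$ is bounded below by a constant independent of $N$. Once those are checked, the estimate is essentially the inversion of the spectral interval followed by elementary algebra on $\gamma$.
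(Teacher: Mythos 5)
Your proposal is correct and is precisely the calculation the paper intends: the lemma is stated without proof as an immediate consequence of the shift formula $J = \lceil \log(16\gamma)\log(4/\epsilon)/\pi^2\rceil$ and the spectral bounds of \lmref{eigs}, and your chain $\gamma \le d/c = O(p^4/h^2) = O(p^4 n^2) = O(N^4)$, hence $\log(16\gamma) = O(\log N)$, fills in exactly that reasoning, including the correct handling of the edge cases where $C$ remains finite. The only cosmetic quibble is the sign bookkeeping in the identification $D = -M_{Y,p}$ (which clashes with the factorisation $D = V^\top V$ assumed in the preceding lemma; taking $D = M_{Y,p}$, $B = -A_{Y,p}$ instead yields the same symmetric intervals and the same $\gamma$), but this does not affect the estimate.
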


This allows us to establish complexity results:

\begin{theorem}
	{\bf Precomputation}
	in \algref{ADI} can be accomplished in $O(n N^2 + J N)$ operations,
where we assume we can compute special functions (hyperbolic trigonometric functions, ${\rm log}$, and the elliptic integral ${\rm dn}$) in $O(1)$ operations, where $N$ is the maximal degrees of freedom of either coordinate direction. {\bf Solve} in \algref{ADI}  can be accomplished in $O(J N^2)$ operations. Using the bound on $J$ in the previous result shows quasi-optimal complexity for the precomputation as $p \rightarrow \infty$.
\end{theorem}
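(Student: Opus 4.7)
The plan is to split the analysis into the precomputation and solve phases of \algref{ADI}, accounting each elementary operation in terms of the structural results already in hand, and finally substitute the bound $J = O(\log N \log \epsilon^{-1})$ from the previous lemma.

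For the precomputation, I would isolate three contributions. First, the generalised eigenvalue problems $\sigma(A,D)$ and $\sigma(B,C)$: both matrices in each pencil are $B^3$-arrowhead, and when viewed as ordinary matrices (ignoring block structure) have linearised bandwidth $O(n)$, since each block row spans only a constant number of $n \times n$ diagonal blocks. Crawford's algorithm~\cite{crawford1973reduction} reduces the pencil to a standard banded symmetric eigenvalue problem of bandwidth $O(n)$, and a banded-to-tridiagonal reduction followed by a tridiagonal eigensolver then delivers all extremal eigenvalues in $O(nN^2)$ operations. Second, after evaluating $\gamma$ and setting $J$ by the closed-form expression, each of the $J$ ADI shifts $p_j,q_j$ can be computed in $O(1)$ operations under the stated special-function assumption, yielding $O(J)$ total. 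Third, for each $j$ the matrices $A - q_j D$ and $B - p_j C$ inherit the $B^3$-arrowhead sparsity (closure under linear combinations that share the same block partition is immediate from the definition) and are SPD by the spectral separation guaranteed by \lmref{eigs}, so the corollary of \secref{Cholesky} provides an $O(N)$ reverse Cholesky per shift, giving $O(JN)$ in aggregate. Summing, the precomputation cost is $O(nN^2 + J + JN) = O(nN^2 + JN)$.

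For the solve, in each of the $J$ outer iterations Algorithm~\ref{Algorithm:ADI} performs two half-steps of the form
\[
    W_{j-1/2} = (F - (A - p_j D) W_{j-1})(B - p_j C)^{-1}, \qquad (A - q_j D)^{-1}\bigl(F - W_{j-1/2}(B - q_j C)\bigr),
\]
acting on an $M \times N$ dense iterate with $M, N$ both $O(N)$. Since each $B^3$-arrowhead operator has $O(N)$ nonzeros, applying $A - p_j D$ or $B - p_j C$ to a dense iterate costs $O(N)$ per column or row, totalling $O(N^2)$; the subsequent triangular solves with the precomputed reverse Cholesky factors are likewise $O(N)$ per right-hand side by the corollary of \secref{Cholesky}, giving $O(N^2)$ per solve. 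Accumulating both half-steps across all $J$ iterations, and absorbing the final $O(N^2)$ right-multiplication by $C^{-1}$, yields $O(JN^2)$.

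Substituting $J = O(\log N \log \epsilon^{-1})$, holding $n$ fixed and letting $p \to \infty$ so that $N \sim p$, the precomputation becomes $O(nN^2 + N \log N \log \epsilon^{-1}) = O(N^2)$ and the solve becomes $O(N^2 \log N \log \epsilon^{-1})$, both quasi-optimal relative to the $\Theta(N^2)$ two-dimensional degrees of freedom. The main technical obstacle is tightening the banded generalised eigenvalue step to $O(nN^2)$: one must verify that the linearised bandwidth of the $B^3$-arrowhead pencils is indeed $O(n)$ and that Crawford's reduction together with a banded tridiagonalisation preserves this bandwidth throughout, so as to avoid the worst-case dense $O(N^3)$ cost. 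The remaining bookkeeping is a direct application of the $B^3$-arrowhead closure under linear combinations and the corollary of \secref{Cholesky}.
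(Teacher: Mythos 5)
Your argument is correct and follows essentially the same route as the paper: the generalised eigenvalue step is bounded by viewing the $B^3$-Arrowhead pencils as banded matrices of bandwidth $O(n)$ and dimension $O(N)$ and invoking Crawford's reduction for $O(nN^2)$, the $J$ reverse Cholesky factorisations cost $O(JN)$ by the corollary of \secref{Cholesky}, and each ADI half-step costs $O(N)$ per column of the dense iterate, giving $O(JN^2)$ for the solve. Your version simply spells out details the paper leaves implicit (the $O(n)$ linearised bandwidth of the interlaced blocks, closure of the $B^3$-Arrowhead class under the shifts, and the final substitution of $J = O(\log N \log\epsilon^{-1})$).
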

\begin{proof}
({\bf Precomputation}): The $B^3$-Arrowhead matrices involved can be viewed as square banded matrices with bandwidth $O(n)$ and dimensions that scale like $O(N)$, hence line (1) can be computed in $O(n N^2)$ operations following \cite{crawford1973reduction}. By the complexity of computing reverse Cholesky factorisations of $B^3$-Arrowhead matrices we know lines (4--6) take $O(J N)$ operations.

({\bf Solve}): Multiplying and inverting $B^3$-Arrowhead matrices can be done on each column of $W_j$ in $O(N)$ operations which immediately gives the result.
\end{proof}

\begin{remark}
Using inverse iteration it is likely that the precomputation cost can be reduced to
$O(N)$ operations but this would require more information on the gap between the eigenvalues. Note also that eigenvalue algorithms have errors which can alter the number of iterations $J$ but we have neglected taking this into consideration as it is unlikely to have a material impact. 
\end{remark}

\section{Transforms and time-evolution}\label{Section:Transforms}

To utilise ADI solvers in an iterative framework for nonlinear elliptic PDEs or in time-evolution problems it is essential to be able to efficiently transform between  values on a grid and coefficients. To accomplish this we need the following transforms in 1D and 2D:

\begin{enumerate}
\item Given a grid, find  the expansion coefficients of the right-hand side into piecewise Legendre polynomials.
\item Given coefficients of the solution in the basis  $\bfQ$, find the values on a grid. 
\end{enumerate} 

The first stage  can be tackled by transforming from values at piecewise Chebyshev grids to
Chebyshev coefficients using the DCT, and thence to Legendre coefficients via a fast Chebyshev--Legendre
transform \cite{alpert1991fast,keiner2011fast,townsend2018fast}. Denote the $p$ Chebyshev points of the first kind as
\[
	\bfx_p^{\rm T} \coloneqq \br[\sin\!\pr(\pi {p-2k+1 \over 2p})]_{k=1}^p.
\]
We denote the transform from Chebyshev points to Legendre coefficients (which combines the DCT with the Chebyshev--Legendre transform) as ${\cal F}_p$ and its inverse as ${\cal F}_p^{-1}$. That is: if
$
f(x) = \bfP_{0:p} \bfc
$
then 
$\bfc = {\cal F}_p f(\bfx_p^{\rm T}).$ Now for multiple elements we affine transform the grid to get a matrix of values. That is, for a matrix of grid points $X_p^n = [\bfx_p^1 | \cdots | \bfx_p^n]$ we transform each column:
$
{\cal F}_p f(X_p^n)
$. Reinterpreting this matrix as a block-vector, whose rows correspond to blocks, gives the coefficients in the basis $\bfP^\bfx$. That is, we use
\[
	{\rm vec}(({\cal F}_p f(X_p^n))^\top)
\]
where
$
{\rm vec} : \bbR^{p \times n} \rightarrow \bbR^{p n}
$
is the operator from matrices to (block) vectors that concatenates the columns.

To extend this to two dimensions, we use the grids $\bfx = {\rm vec}(X_p^n)$ and $\bfy = {\rm vec}(X_q^m)$ and hence we want to transform from a matrix of values on the tensor product grid i.e., 
\[
	F \coloneqq \br[f(x_k,y_j)]_{k=1,j=1}^{k=pn,j=qm}.
\]
The 2D  transform is then
$
{\cal F}_p^n F ({\cal F}_q^m)^\top.
$

The second stage can be accomplished by first computing the coefficients in a piecewise Legendre basis via applying the matrix $R^\bfx$,
transforming to Chebyshev coefficients via  a fast Legendre--Chebyshev transform, then applying the inverse DCT to recover the values on a Chebyshev grid. That is, if we have
\[
	u(x,y) = \bfQuptop(x) U \bfQuptoq(y)^\top
\]
then we can transform back to a grid via
\[
	u(\bfx, \bfy^\top) = ({\cal F}_p^n)^{-1}  R^\bfx U (R^\bfy)^\top {\cal F}_q^{-\top}.
\]

\begin{figure}[ht!]
	\centering
	\includegraphics[width =0.45 \textwidth]{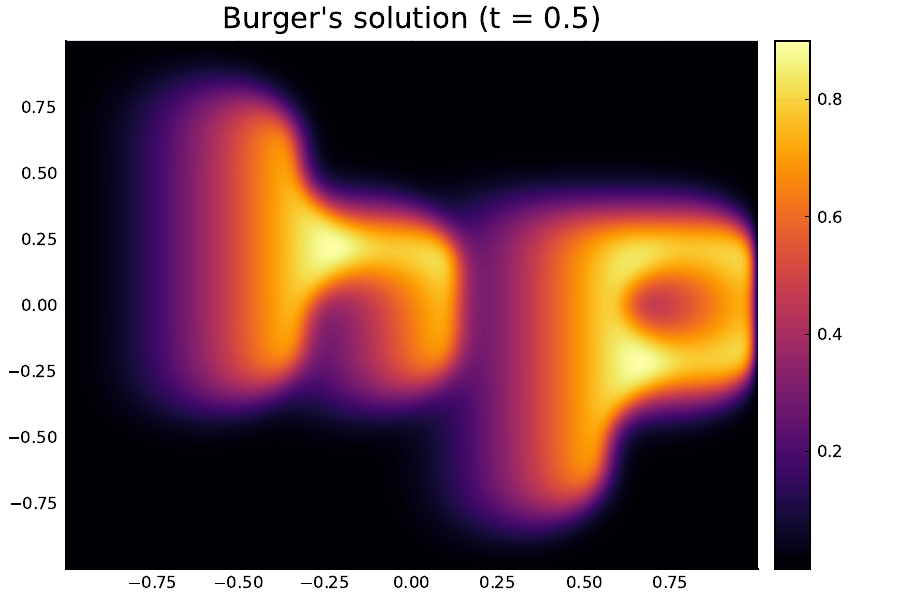}
	\includegraphics[width =0.45 \textwidth]{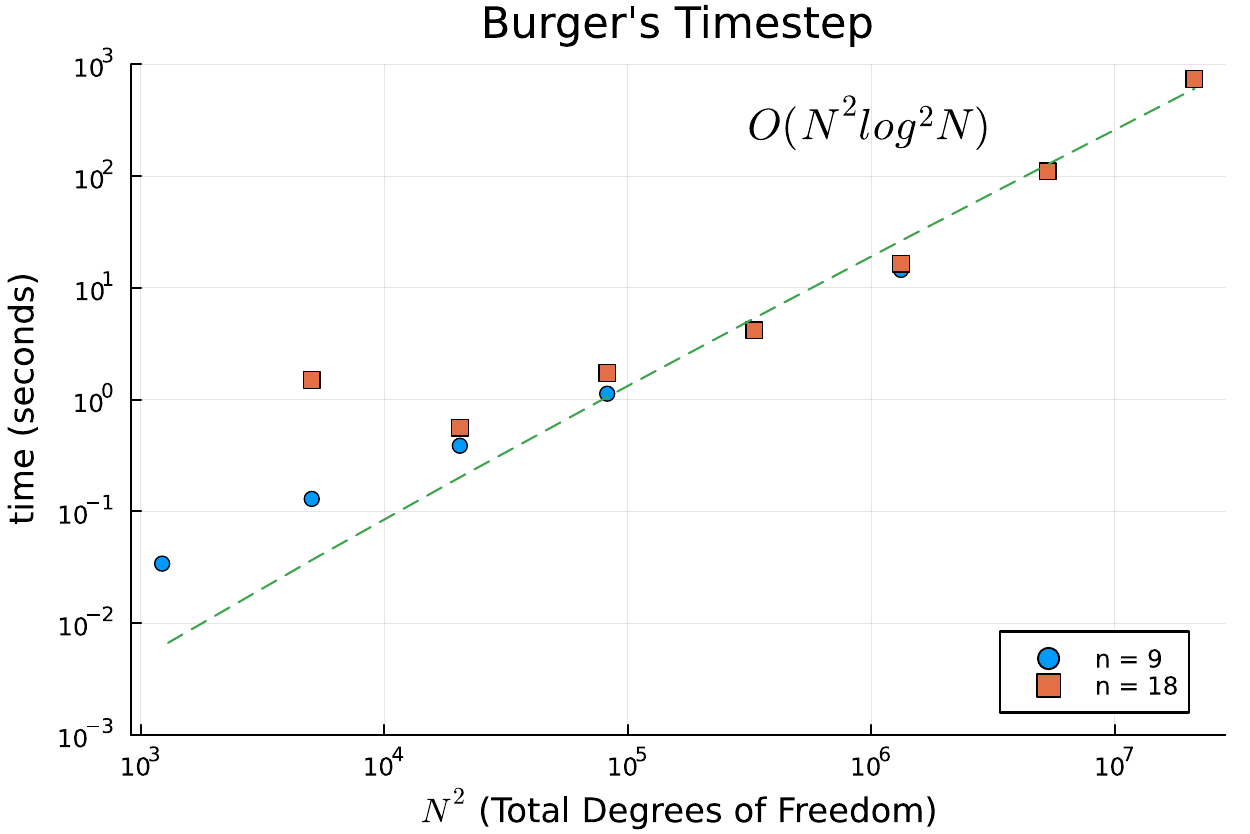}
	\caption{Left: Solution  of Burgers' equation $u_t + u u_x = \epsilon \Delta u$ with a zero Dirichlet boundary condition, where the initial condition is the discontinuous right-hand side as in \figref{discont},
	with $\epsilon = 0.1$. 
	Right: time taken for a single time-step, using an implicit-explicit splitting method where the linear part is solved using implicit Euler and the nonlinear part with explicit Euler, after transforming back to a grid.
	\label{Figure:burgers}}
\end{figure}

As an example of the utility of fast transforms,  \figref{burgers}  considers the classic Burgers' equation:
\[
	(u_t+ u u_x)(x,y,t) = \epsilon \Delta u(x,y,t)
\]
with a zero Dirichlet boundary condition and a discontinuous initial condition. We discretise in time using a simple implicit-explicit splitting method, taking a linear step via implicit Euler followed by a nonlinear step via explicit Euler:
\begin{align*}
u_{k+1/2}(x,y) &= (I - (\delta t)  \epsilon \Delta)^{-1} u_k(x,y),\\
u_{k+1}(x,y) &= (u_{k+1/2} + (\delta t) u_{k+1/2,x} u_{k+1/2})(x,y).
\end{align*}
We represent $u_k(x,y)$ as a matrix $U_k$ containing coefficients in an expansion of  tensor products of piecewise Legendre polynomials, i.e., using the basis $\bfP^\bfx$. The half time-steps $u_{k+1/2}(x,y)$ are then represented as a matrix $U_{k+1/2}$ giving coefficients in tensor products of $\bfQ^\bfx$, where the coefficients are computed using ADI as described above.  To determine $u_{k+1/2}(x,y)$ on a grid we simply convert down to Legendre and then apply the inverse fast Legendre transform, that is  values are approximated by:
\[
	V_{k+1/2} \coloneqq {\cal F}^{-1}R U_{k+1/2}R^\top {\cal F}^{-\top}.
\]
For $u_{(k+1/2),x}(x,y)$ we compute its Legendre coefficients using  the derivative matrix
alongside the conversion matrix:
\[
	V_{k+1/2,x} \coloneqq {\cal F}^{-1}D U_{k+1/2}R^\top {\cal F}^{-\top}.
\]
We can then determine the Legendre coefficients $U_{k+1}$ as
\[
	U_{k+1} \coloneqq {\cal F} \br[V_{k+1/2} + (\delta t) V_{k+1/2,x} \otimes V_{k+1/2}].
\]
The right-hand side plot in \cref{Figure:burgers} roughly demonstrates the predicted $O(N^2 \log^2 N)$ complexity.

\section{ADI as a preconditioner}
\label{Section:Preconditioning}

In this section, we explore how our ADI-based solver may be used as a preconditioner for an iterative Krylov method to tackle problems with variable coefficients, including an example with a singularity. In particular one may use a graded  mesh to isolate the singularity so that the variable coefficient is well-approximated by piecewise polynomials. Experimentally we see that the ADI preconditioner is $hp$-robust and we, therefore, retain the quasi-optimal complexity of the solver as $h \to 0$ and $p \to \infty$. 

\begin{remark}[Non-Cartesian cells and curved boundaries]
The preconditioning strategy outlined in this section may also be used for discretisations of elliptic problems with non-Cartesian cells or posed on domains with curved boundaries. The underlying idea utilizes equivalent operator preconditioning \cite{axelsson2009}. We omit a description or implementation in this work but refer the interested reader to an excellent introduction by Brubeck and Farrell in \cite[Sec.~2.7]{Brubeck2022} who prove the preconditioner is spectrally equivalent to the original problem. See also \cite{couzy1995,fischer2000,witte2021}.
\end{remark}

Consider the domain $\Omega = (-1,1)^2$ and the following singular variable coefficient PDE problem:
\begin{align}
(-\Delta  - 10 \log \sqrt{x^2+y^2}) u(x,y) = 1 \;\; \text{with} \; u|_{\partial \Omega} = 0.
\label{eq:variable-coeff}
\end{align}
Note that the variable coefficient $- 10 \log \sqrt{x^2+y^2} \to \infty$ as $|(x,y)|_{\ell^2} \to 0$, i.e.~at the origin in the centre of the domain. After an FEM discretisation, the residual may be evaluated in a matrix-free manner via a quasi-optimal complexity transform as outlined in \secref{Transforms}. In particular, let $G \coloneqq [- 10 \log \sqrt{x_k^2+y_j^2}]_{k=1,j=1}^{k=pn,j=pn}$. Then given the FEM coefficient matrix $U$ of a 2D FEM function $u_{p}$, we may approximately evaluate $F \coloneqq \langle v_p, (- \log \sqrt{x^2+y^2}) u_{p} \rangle$, for all basis functions $v_{p}$, via
\begin{align}
F = \mathrm{vec}(R^{-1} M_P \mathcal{F}[ G \odot (\mathcal{F}^{-1} R U R^\top \mathcal{F}^{-\top})] \mathcal{F}^\top M_P R^{-\top}),
\label{eq:evaluation}
\end{align}
where $\odot$ denotes the Hadamard product (element-wise multiplication between two matrices). We emphasize \cref{eq:evaluation} is computed in quasi-optimal complexity. This motivates the use of an iterative Krylov method to solve the $hp$-FEM discretisation of \cref{eq:variable-coeff}. We opt for the conjugate gradient method (CG) preconditioned with the inverse discretised weak Laplacian matrix applied via the quasi-optimal ADI strategy of \secref{ADI} with the tolerance $10^{-4}$.

We use a tensor-product mesh graded towards the origin with the cells endpoints
\begin{align}
\mathcal{T}_m = (-1, -10^{-1}, \dots, 10^{-m}, 0, 10^{-m}, \dots, 10^{-1}, 1)^2
\label{eq:graded-mesh}
\end{align}
for a given $m \geq 1$. We plot the solution in \cref{Figure:Preconditioner} and provide the number of preconditioned CG iterations for various $p$ and number of cells in \cref{Tab:Preconditioner}. We observe $hp$-robustness--the number of preconditioned CG iterations is independent of the degree and number of cells of the mesh--leading to a quasi-optimal complexity solver for \cref{eq:variable-coeff}

\begin{figure}[ht!]
\centering
\includegraphics[width =0.4 \textwidth]{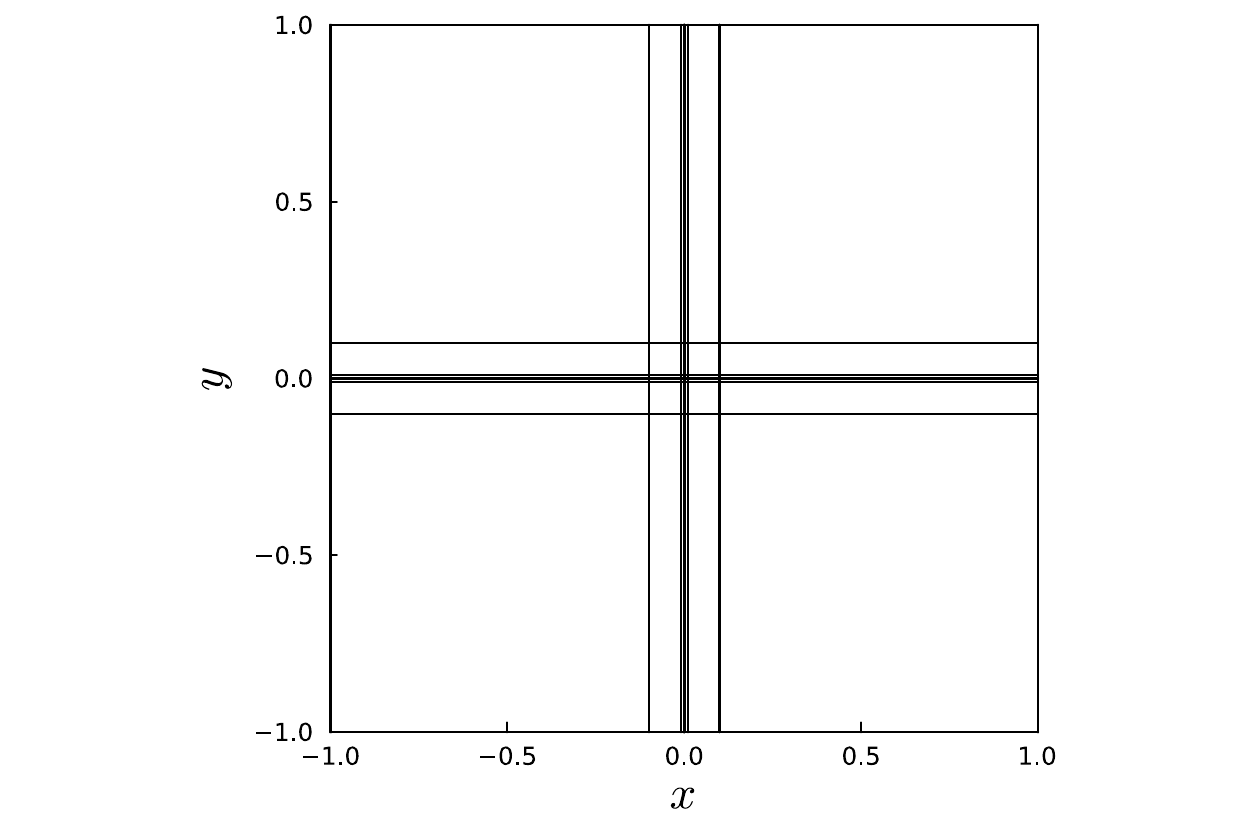}
\includegraphics[width =0.48 \textwidth]{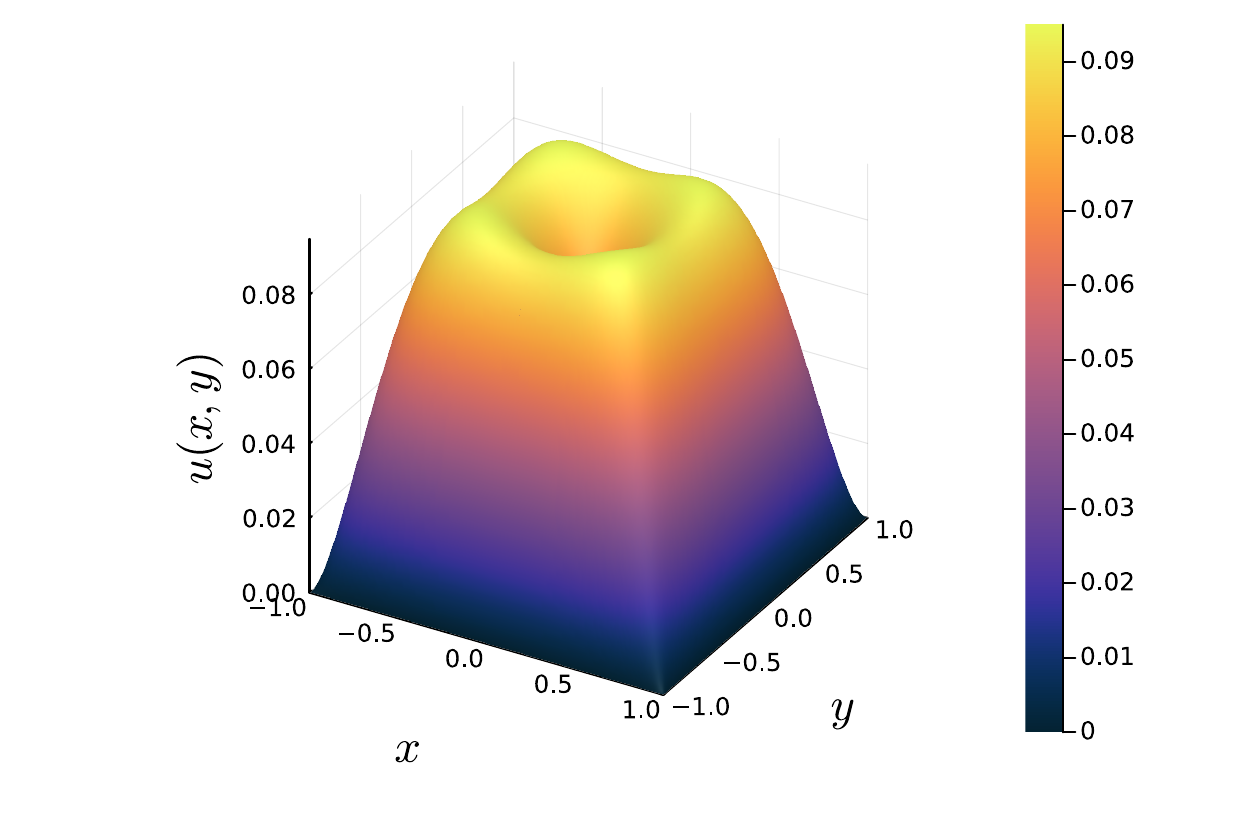}
\caption{ Left: Graded tensor-product mesh \cref{eq:graded-mesh} with $m=3$. Right: Solution of singular variable coefficient problem \cref{eq:variable-coeff} with partial (pre-tensor) degree $p=128$ and graded mesh \cref{eq:graded-mesh} with $m=3$.}
\label{Figure:Preconditioner}
\end{figure}

\begin{table}[ht]
\small
\centering
\begin{tabular}{|c|ccccc|}
\hhline{~|-----|}
\multicolumn{1}{c|}{} & \multicolumn{5}{c|}{$p$}\\
\hline
\rule{0pt}{2.5ex} $m$ (\# cells) & $2^3$ &$2^4$ & $2^5$ & $2^6$ & $2^7$  \\
\hline
1 (16)&8 &7&7&7&7\\
2 (36)& 7&7&7&7&7\\
3 (64)& 7&7&7&7&7\\
\hline
\end{tabular}%
\caption{ The number of preconditioned CG iterations to solve the $hp$-FEM discretisation of \cref{eq:variable-coeff} to a relative tolerance of $10^{-8}$ with increasing partial degree $p$ and number of cells (in brackets) in the graded mesh \cref{eq:graded-mesh} as controlled by the parameter $m$. The preconditioner is the inverse weak Laplacian matrix applied via the quasi-optimal ADI strategy of \secref{ADI} with tolerance $10^{-4}$.} 
\label{Tab:Preconditioner}
\end{table}

\section{Future work}

We have constructed the first provably quasi-optimal complexity $hp$-FEM method for the (screened) Poisson equation on a rectangle, built on taking advantage of the sparsity structure. There are some clear extensions to this work:

\begin{enumerate}
\item For non-positive definite but symmetric operators, it is possible to do $L^\top DL$ factorisations of the $B^3$-Arrowhead matrices in optimal complexity. However, this may lead to ill-conditioning. Unfortunately, stable factorisations such as $QL$  only achieve $O(p n + n^3)$ complexity as there is fill-in in the top blocks. 
\item  Extensions to quasi-optimal solves on cylinders and box domains is possible. Cylinders have already been considered in \cite{papadopoulos2024} and a box domain could be tackled via a nested ADI approach as discussed in \cite[Sec.~5]{fortunato2020fast}. However, Fortunato and Townsend note that the convergence of the nested approach is highly sensitive to the termination tolerances of the inner solves. Subsequently a more robust approach via tensor-trains was introduced in \cite{shi2021}. Our techniques directly translate to the tensor-train context, however, the implementation is nontrivial and left for future work. 
\item  A fully parallelised implementation with distributed memory in 2D and 3D. To minimise communication between workers this would involve distributing on elements, but in principle the ADI steps can be largely parallelised with only minimal communication corresponding to data on the interface needed to be shared. 
\item  Extensions to higher-order time-steppers.  For linear problems,  Backward Differentiation Formul\ae\ (BDF)  are a natural choice for taking advantage of the efficient solves of our spacial discretisation whilst achieving higher order accuracy. For nonlinear time-evolution problems, a straightforward extension would be to use Strang splitting, which would achieve the same computational complexity as the simple implicit-explicit time-stepper considered  with higher order accuracy.     
\end{enumerate}

\section*{Acknowledgments}
We would like to thank Dan Fortunato, Marcus Webb, and Matt Colbrook. IP would like to thank Pablo Brubeck for their discussion on optimal complexity $p$-multigrid methods. SO and IP were  supported by an EPSRC grant (EP/T022132/1). IP was also funded by the Deutsche Forschungsgemeinschaft (DFG, German Research
Foundation) under Germany's Excellence Strategy -- The Berlin Mathematics
Research Center MATH+ (EXC-2046/1, project ID: 390685689).

\bibliographystyle{siamplain}
\bibliography{optimalcomplexityhp}

\appendix

\section{Recurrences}\label{Appendix:Recurrences}

In this appendix we provide the formul\ae\ for the entries in the matrices \cref{rec:lowering}, \cref{rec:conversion1}--\cref{rec:conversion2} and \cref{rec:deriv1}--\cref{rec:deriv2}.
From  \cite[18.9.8]{DLMF}, we have that
\begin{align}
W_k(x) = {1 \over 2k+3}(P_k(x) - P_{k+2}(x)). \label{app:1}
\end{align}
Thus we deduce that the entries in \cref{rec:lowering} are
\[
\underbrace{[W_0,W_1,W_2,\ensuremath{\ldots}]}_\bfW = \underbrace{[P_0,P_1,P_2,\ensuremath{\ldots}]}_\bfP \underbrace{\begin{bmatrix} 1/3 \\ 0 & 1/5 \\ -1/3 & 0 & 1/7 \\ &-1/5 &0 & 1/9 \\ &&\ensuremath{\ddots} & \ensuremath{\ddots} & \ensuremath{\ddots} 
\end{bmatrix}}_{L_W}.
\]

Next we derive the entries in \cref{rec:conversion1}--\cref{rec:conversion2}. Consider the reference cell $(-1,1)$. Then there exists two hat functions with nonzero support, $h_0(x) = (1-x)/2$ and $h_1(x) = (x+1)/2$. Since these are degree one polynomials then, for $k \geq 2$, $\langle P_k, h_j \rangle = 0$, and hence $R_{k0} = {\bf 0}$. Moreover, we have that $M_{00} =2$, $M_{11} = 2/3$, $\langle P_0, h_j \rangle = 1$, and $\langle P_1, h_j \rangle = (-1)^{j+1}/3$ for $j \in \{0,1\}$. A scaling argument reveals that these entries are independent of the size of the element. Hence $R_{k0} \in \mathbb{R}^{n \times(n+1)}$ and the entries in \cref{rec:conversion1} are
\begin{align}
R_{00} = 
\begin{bmatrix}
1/2 & 1/2 & &\\
& \ddots & \ddots & \\
& & 1/2 & 1/2
\end{bmatrix},
\;\;
R_{10} = 
\begin{bmatrix}
-1/2 & 1/2 & &\\
& \ddots & \ddots & \\
& & -1/2 & 1/2
\end{bmatrix},
\end{align}
Moreover, for $j > 0$, $R_{kj} \in \mathbb{R}^{n \times n}$ and from \cref{app:1} we deduce the entries in \cref{rec:conversion2} are
\begin{align}
-R_{(k+2)j} = R_{kj} = 
\begin{bmatrix}
\frac{1}{1+2j} & &\\
& \ddots & \\
& &   \frac{1}{1+2j}  
\end{bmatrix} \;\; \text{if} \;\; k = j \pm 1,
\end{align}
and otherwise $R_{kj} = {\bf 0}$.

To compute the entries in \cref{rec:deriv1}--\cref{rec:deriv2}, consider the reference cell $(-1,1)$ and note that $h_0'(x) = -1/2$, $h_1'(x) = 1/2$ and $W_k'(x) = -P_{k+1}(x)$, cf.~\cref{eq:deriv}. Let $\delta_i = x_{i} - x_{i-1}$ for $i \in \{1:n\}$. Then, by a scaling argument, we deduce that
\begin{align}
\hspace{5mm} D_{00} = 
\begin{bmatrix}
-1/\delta_1 & 1/\delta_1 & &&\\
& -1/\delta_2 & 1/\delta_2 && \\
& &\ddots &\ddots & \\
& &&\hspace{-5mm} -1/\delta_n & 1/\delta_n
\end{bmatrix} \in \mathbb{R}^{n \times (n+1)}
\end{align}
and, for $k>0$,
\begin{align}
D_{kk} = 
\begin{bmatrix}
-2/\delta_1 & & &\\
&-2/\delta_2  & &\\
& &\ddots & \\
& & & -2/\delta_n
\end{bmatrix} \in \mathbb{R}^{n \times n} \;\; \text{with $D_{kj} = {\bf 0}$ if $k \neq j$}.
\end{align}

\end{document}